\newtheorem{theorem}{Theorem}[section]
\newtheorem{corollary}[theorem]{Corollary}
\newtheorem{lemma}[theorem]{Lemma}
\newtheorem{proposition}[theorem]{Proposition}
\newtheorem{example}[theorem]{Example}
\newtheorem{remark}[theorem]{Remark}
\numberwithin{equation}{subsection}
\newtheorem*{ack}{Acknowledgement}
\newcommand{\Aut}{\operatorname{Aut}}
\newcommand{\id}{\mathrm{id}}
\newcommand{\restr}[1]{|_{#1}}
\begin{document}

\title{Alexander and Markov theorems for virtual doodles}

\author{Neha Nanda}
\author{Mahender Singh}
\address{Department of Mathematical Sciences, Indian Institute of Science Education and Research (IISER) Mohali, Sector 81,  S. A. S. Nagar, P. O. Manauli, Punjab 140306, India.}
\email{nehananda94@gmail.com}
\email{mahender@iisermohali.ac.in}

\subjclass[2010]{Primary 57K12; Secondary 57K20}
\keywords{Alexander Theorem, doodle,  Gauss data, Markov Theorem,  twin group, virtual doodle, virtual twin group}

\begin{abstract}
Study of certain isotopy classes of a finite collection of immersed circles without triple or higher intersections on closed oriented surfaces can be thought of as a planar analogue of virtual knot theory where the genus zero case corresponds to classical knot theory. Alexander and Markov theorems for the genus zero case are known where the role of groups is played by twin groups, a class of right angled Coxeter groups with only far commutativity relations. The purpose of this paper is to prove Alexander and Markov theorems for higher genus case where the role of groups is played by a new class of groups called virtual twin groups which extends twin groups in a natural way.
\end{abstract}

\maketitle

\section{Introduction}
The study of doodles on surfaces began with the work of Fenn and Taylor \cite{FennTaylor} who defined a doodle as a finite collection of simple closed curves lying in a 2-sphere without triple or higher intersections. The idea was extended by Khovanov \cite{Khovanov} to a finite collection of closed curves without triple or higher intersections on a closed oriented surface. An analogue of the link group for doodles was also introduced in \cite{Khovanov} and several infinite families of doodles whose fundamental groups have infinite centre were constructed. Recently, Bartholomew-Fenn-Kamada-Kamada \cite{BartholomewFennKamada2018} extended the study of doodles to immersed circles on a closed oriented surface of any genus, which can be thought of as virtual links analogue for doodles. An invariant of virtual doodles by coloring their diagrams using a special type of algebra has been constructed in \cite{BartholomewFennKamada2019}. Recently, an Alexander type invariant for oriented doodles which vanishes on unlinked doodles with more than one component has been constructed in \cite{CisnerosFloresJuyumayaMarquez}. 
\par

The role of groups for doodles on a  2-sphere is played by  twin groups. The twin groups $T_n$, $n \ge 2$, form a special class of right angled Coxeter groups and appeared in the work of Shabat and Voevodsky \cite{ShabatVoevodsky}, who referred them as Grothendieck cartographical groups. Later, these groups were investigated by Khovanov \cite{Khovanov} under the name twin groups, who also gave a geometric interpretation of these groups similar to the one for classical braid groups. Consider configurations of $n$ arcs in the infinite strip $\mathbb{R} \times  [0,1]$ connecting $n$ marked points on each of the parallel lines $\mathbb{R} \times \{1\}$ and $\mathbb{R} \times \{0\}$ such that each arc is monotonic and no three arcs have a point in common. Two such configurations are equivalent if one can be deformed into the other by a homotopy of such configurations in $\mathbb{R} \times [0,1]$ keeping the end points of arcs fixed. An equivalence class under this equivalence is called a twin. The product of two twins can be defined by placing one twin on top of the other, similar to that in the braid group $B_n$. The collection of all twins with $n$ arcs under this operation forms a group isomorphic to $T_n$. Taking the one point compactification of the plane, one can define the closure of a twin on a $2$-sphere analogous to the closure of a braid in $\mathbb{R}^3$. Khovanov also  proved an analogue of the classical Alexander Theorem for doodles on a 2-sphere, that is, every oriented doodle on a $2$-sphere is closure of a twin. An analogue of the Markov Theorem for doodles on a 2-sphere has been established recently by Gotin \cite{Gotin}. From a wider perspective, a recent work \cite{BartholomewFenn2019} look at which Alexander and Markov theories can be defined for generalized knot theories.
\par

The pure twin group $PT_n$ is the kernel of the natural surjection from $T_n$ onto the symmetric group $S_n$ on $n$ symbols. Algebraic study of twin and pure twin groups has recently attracted a lot of attention.   In a recent paper \cite{BarVesSin}, Bardakov-Singh-Vesnin proved that $PT_n$ is free for $n = 3,4$ and not free for $n \geq 6$. Gonz\'alez-Le\'on-Medina-Roque \cite{GonGutiRoq} recently showed that $PT_5$ is a free group of rank $31$.  A lower bound for the number of generators of $PT_n$ is given in \cite{HarshmanKnapp} while an upper bound is given in \cite{BarVesSin}. It is worth noting that \cite{HarshmanKnapp} physicists refer twin and pure twin groups as  traid and pure traid groups, respectively.  Description of $PT_6$ has been obtained recently by Mostovoy and Roque-M\'arquez \cite{MostRoq} where they prove that $PT_6$ is a free product of the free group $F_{71}$ and 20 copies of the free abelian group $\mathbb{Z} \oplus \mathbb{Z}$. A complete presentation of $PT_n$ for $n \ge 7$ is still not known and seems challenging to describe. Automorphisms, conjugacy classes and centralisers of involutions in twin groups have been explored in recent works \cite{NaikNandaSingh1, NaikNandaSingh2}. 
\par

One can think of the study of isotopy classes of immersed circles without triple or higher intersection points on closed oriented surfaces as a planar analogue of virtual knot theory with the genus zero case corresponding to classical knot theory. As mentioned earlier Alexander and Markov theorems for the genus zero case are already known in the literature where the role of groups is played by twin groups. The purpose of this paper is to prove Alexander and Markov theorems for higher genus case. We show that virtual twin groups introduced in a recent work \cite{BarVesSin} as abstract generalisation of twin groups play the role of groups for the theory of virtual doodles. A virtual twin group extends a twin group and surjects onto a symmetric group in a natural way. A pure analogue of the virtual twin group is defined analogously as the kernel of the natural surjection onto the symmetric group.
\par

The paper is organised as follows. We define twin and virtual twin groups in Section \ref{basics}. A topological interpretation of virtual twins is given in Section \ref{topo-interpret-virtual-twins}. We discuss virtual doodle diagrams and their Gauss data in Section \ref{virtual-doodle-diagram}. Finally, we prove Alexander Theorem for virtual doodles in Section \ref{Alexander-theorem-section} and Markov Theorem in Section \ref{Markov-theorem}.

\section{Twin and virtual twin groups}\label{basics}
For an integer $n \ge 2$, the \textit{twin group} $T_n$ is defined as the group with the presentation
$$\big\langle s_1, s_2, \dots, s_{n-1}~|~ s_i^{2} = 1~ \text{for}~1 \le i \le n-1~\textrm{and}~ s_is_j = s_js_i~ \text{for}~ |i-j| \geq 2 \big\rangle.$$
Elements of $T_n$ are called twins and the generator $s_i$ can be geometrically presented by a configuration shown in Figure \ref{Twin}.

\begin{figure}[hbtp]
\centering
\includegraphics[scale=0.45]{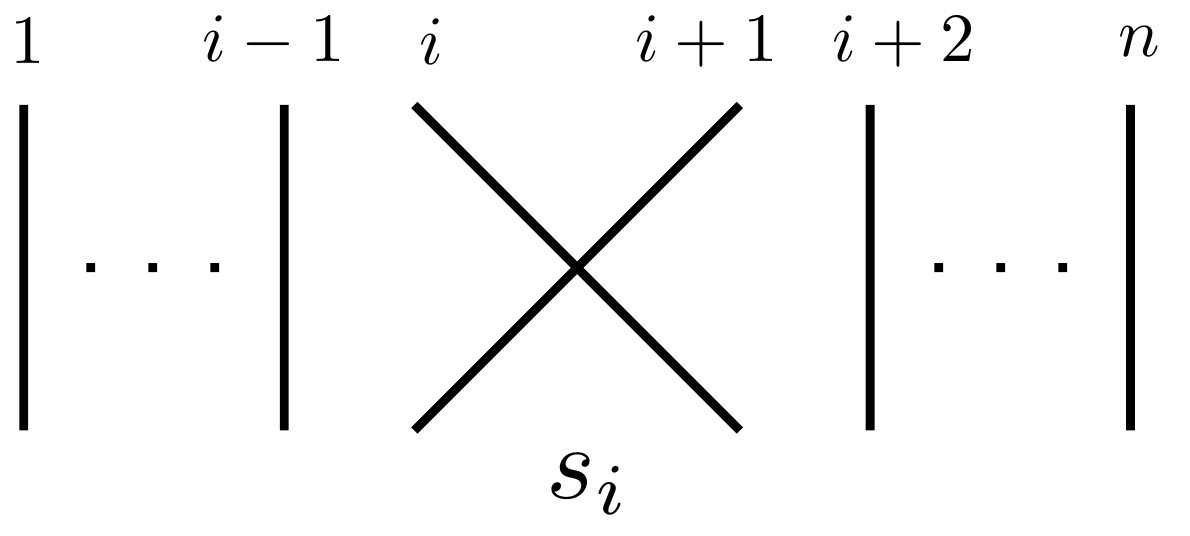}
\caption{The twin $s_i$}
\label{Twin}
\end{figure}

The kernel of the natural surjection from $T_n$ onto $S_n$, the symmetric group on $n$ symbols, is called the \textit{pure twin group} and is denoted by $PT_n$.

The {\it virtual twin group} $VT_n$, $n \ge 2$, was introduced in \cite[Section 5]{BarVesSin} as an abstract generalisation of the twin group $T_n$. The abstract group $VT_n$ has generators $\{ s_1, s_2, \ldots, s_{n-1}, \rho_1, \rho_2, \ldots, \rho_{n-1}\}$ and defining relations
\begin{eqnarray}\label{virtual-twin-relations}
s_i^{2} &=&1 ~~ \textrm{for } i = 1, 2, \dots, n-1,\\
\nonumber s_is_j &=& s_js_i ~~ \textrm{for } |i - j| \geq 2,\\
\nonumber \rho_i^{2} &=& 1 ~~ \textrm{for } i = 1, 2, \dots, n-1,\\
\nonumber \rho_i\rho_j &=& \rho_j\rho_i ~~ \textrm{for } |i - j| \geq 2,\\
\nonumber \rho_i\rho_{i+1}\rho_i &=& \rho_{i+1}\rho_i\rho_{i+1}~ ~\textrm{for } i = 1, 2, \dots, n-2,\\
\nonumber \rho_is_j &=& s_j\rho_i ~~ \textrm{for } |i - j| \geq 2,\\
\nonumber \rho_i\rho_{i+1} s_i &=& s_{i+1} \rho_i \rho_{i+1}~ ~\textrm{for } i = 1, 2, \dots, n-2.
\end{eqnarray}
The kernel of the natural surjection from $VT_n$ onto $S_n$ is called the \textit{virtual pure twin group} and is denoted by $VPT_n$. We show that virtual twin groups play the role of groups in the theory of virtual doodles.

\section{Topological interpretation of virtual twins}\label{topo-interpret-virtual-twins}
Consider a set $Q_n$ of $n$ points in $\mathbb{R}$. A \textit{virtual twin diagram} on $n$ strands is a subset $D$ of $\mathbb{R} \times [0,1]$ consisting of $n$ intervals called {\it strands} with $\partial D = Q_n  \times \{0,1\}$ and satisfying the following conditions:

\begin{enumerate}
\item the natural projection $\mathbb{R} \times [0,1] \to [0,1]$ maps each strand homeomorphically onto the unit interval $[0,1]$, 
\item the set $V(D)$ of all crossings of the diagram $D$ consists of transverse double points of $D$ where each crossing has the pre-assigned information of being a real or a virtual crossing as depicted in Figure \ref{Crossings}. A virtual crossing is depicted by a crossing encircled with a small circle.

\end{enumerate}
\begin{figure}[hbtp]
\centering
\includegraphics[scale=0.4]{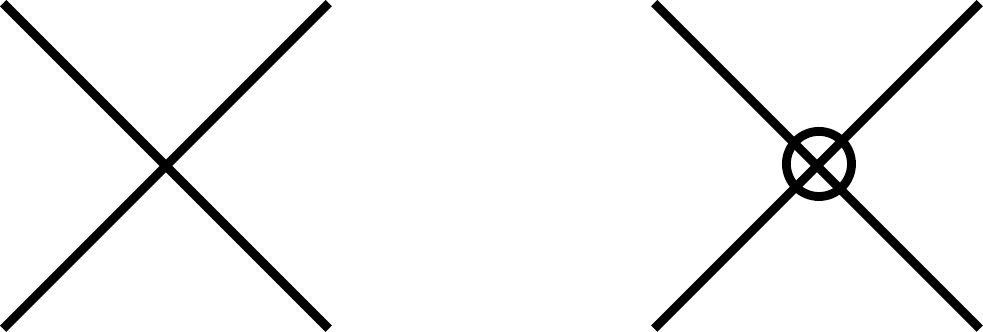}
\caption{Real and virtual crossings}
\label{Crossings}
\end{figure}

Two virtual twin diagrams $D_1$ and $D_2$ on $n$ strands are said to be \textit{equivalent} if one can be obtained from the other by a finite sequence of moves as shown in Figure \ref{ReidemeisterMoves} and isotopies of the plane. We define a \textit{virtual twin} as an equivalence class of such virtual twin diagrams. Let $\mathcal{VT}_n$ denote the set of all virtual twins on $n$ strands. The product $D_1D_2$ of two virtual twin diagrams $D_1$ and $D_2$ is defined by placing $D_1$ on top of $D_2$ and then shrinking the interval to $[0,1]$. It is clear that if  $D_1$ is equivalent to $D_1'$ and $D_2$ is equivalent to $D_2'$ , then $D_1D_2$ is equivalent to $D_1'D_2'$. Thus, there is a well-defined binary operation on the set $\mathcal{VT}_n$. It is easy to observe that this operation is indeed associative.

\begin{figure}[hbtp]
\centering
\includegraphics[scale=0.3]{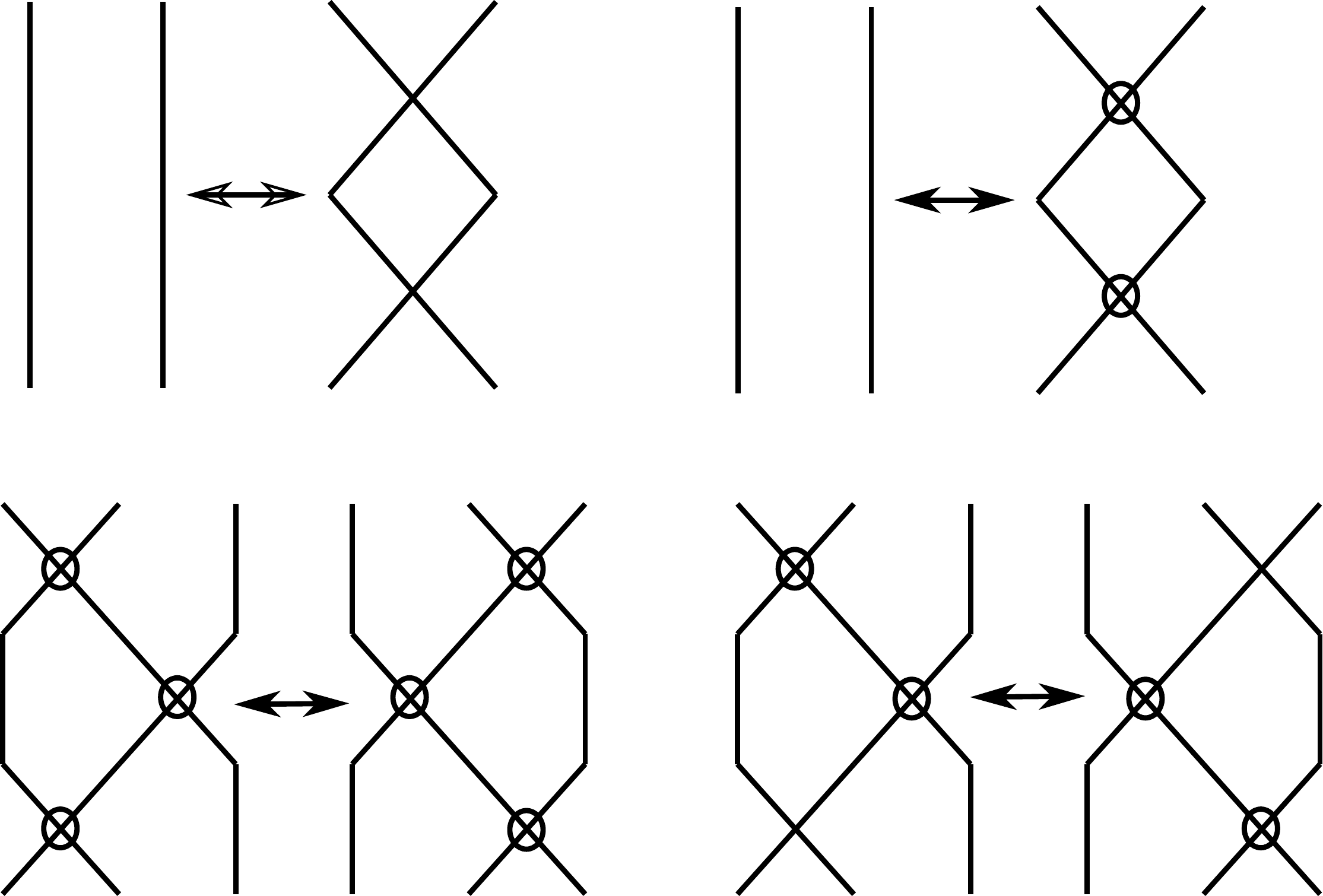}
\caption{Moves for virtual twin diagrams}
\label{ReidemeisterMoves}
\end{figure}

\begin{figure}[hbtp]
\centering
\includegraphics[scale=0.27]{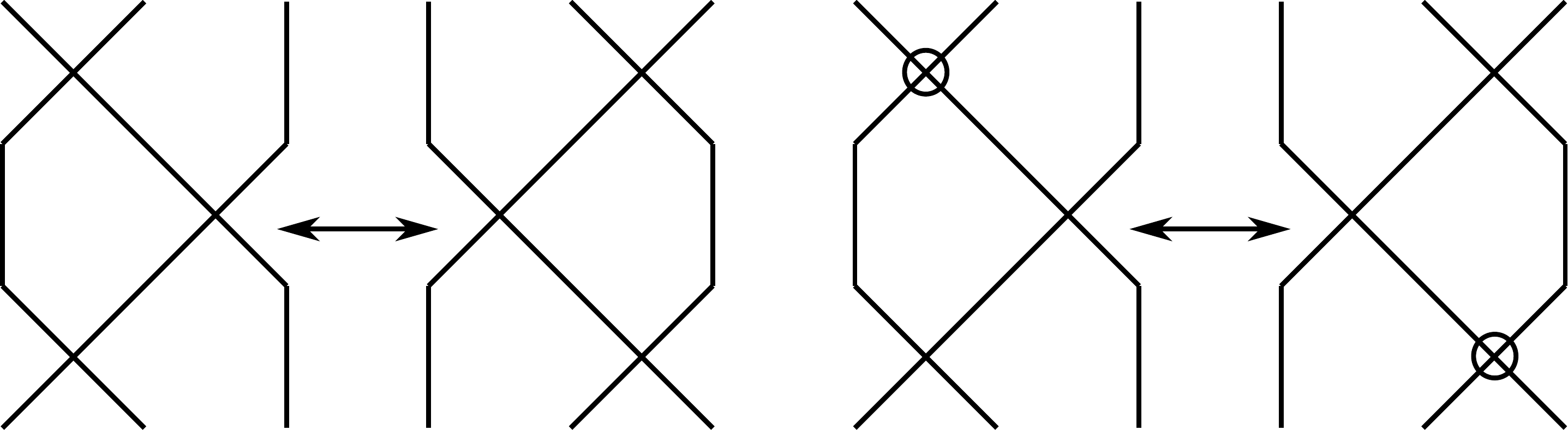}
\caption{Forbidden Moves}
\label{ForbiddenMoves}
\end{figure}

\begin{remark}
 Every classical link diagram can be regarded as an immersion of circles in the plane with an extra structure (of over/under crossing) at the double points. If we take a diagram without this extra structure, then it is simply a shadow of some link in $\mathbb{R}^3$ and such crossings are called {\it flat crossings} in the literature \cite{Kauffman1999}.  An easy check shows that if one is allowed to apply the classical Reidemeister moves to such a diagram, then the diagram can be reduced to a disjoint union of circles. However, this does not happen in flat virtual diagrams, that is, diagrams which have both flat and virtual crossings. It is worth noting that if we include the first forbidden move in the moves for virtual twin diagrams, then we get precisely the theory of {\it flat virtual links} initiated in \cite{Kauffman1999}. We note that the moves in Figure \ref{ForbiddenMoves} are forbidden and cannot be obtained from moves in Figure \ref{ReidemeisterMoves} (see Proposition \ref{Justification For Forbidden Moves}). 
 \end{remark}

\begin{figure}[hbtp]
\centering
\includegraphics[scale=0.45]{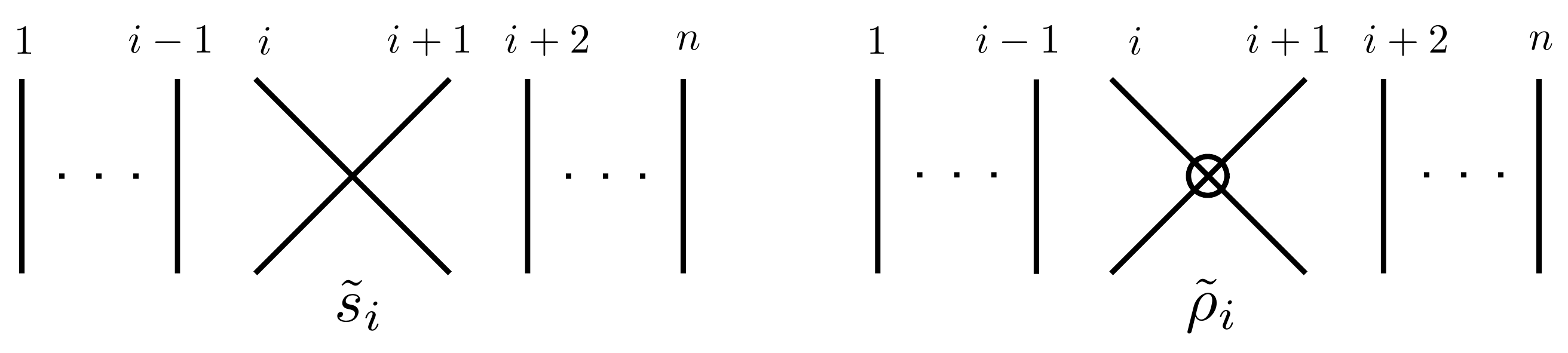}
\caption{Generators $\tilde{s}_i$ and $\tilde{\rho}_i$}
\label{Generators}

\end{figure}
\begin{lemma} For each $n \geq 2$, the set $\mathcal{VT}_n$ of virtual twins forms a group under the operation defined above.
\end{lemma}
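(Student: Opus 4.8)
The operation on $\mathcal{VT}_n$ is already known to be well defined and associative, so the plan is to exhibit a two-sided identity and two-sided inverses. The identity will be the class of the \emph{trivial diagram} $e_n = Q_n \times [0,1]$ consisting of $n$ disjoint vertical strands; for any diagram $D$, stacking $e_n$ above (or below) $D$ and rescaling the second coordinate produces a diagram equivalent to $D$ (the rescaling is realized by an isotopy of the plane), so that $[e_n][D] = [D] = [D][e_n]$.

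For inverses, I would first reduce every diagram to a word in the elementary diagrams $\tilde{s}_i$ and $\tilde{\rho}_i$ of Figure \ref{Generators}. Given a virtual twin diagram $D$, a small perturbation realized by an isotopy of the plane puts the finitely many crossings of $D$ at pairwise distinct heights $t_1 < \dots < t_k$ in $(0,1)$; this is possible because each strand projects homeomorphically onto $[0,1]$, so the crossings are isolated transverse double points and can be slid vertically. Slicing $\mathbb{R}\times[0,1]$ along horizontal lines chosen strictly between consecutive $t_j$ decomposes $D$ as a product $D_1 D_2 \cdots D_k$ in which each slice $D_j$ contains exactly one crossing; after straightening the crossing-free strands in each slice, $D_j$ is isotopic to one of the generators $\tilde{s}_i$ or $\tilde{\rho}_i$. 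Hence $D$ is equivalent to a product $g_1 g_2 \cdots g_k$ with each $g_j \in \{\tilde{s}_1,\dots,\tilde{s}_{n-1},\tilde{\rho}_1,\dots,\tilde{\rho}_{n-1}\}$.

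The key remaining observation is that each generator is an involution up to equivalence: the moves of Figure \ref{ReidemeisterMoves} include the flat and virtual Reidemeister~II cancellations, which yield $\tilde{s}_i \tilde{s}_i \sim e_n$ and $\tilde{\rho}_i\tilde{\rho}_i \sim e_n$. Setting $D^{-} = g_k g_{k-1}\cdots g_1$, which geometrically is the reflection of $D$ across the line $\mathbb{R}\times\{1/2\}$, I would cancel the innermost pair repeatedly,
$$ D\,D^{-} = g_1\cdots g_{k-1}(g_k g_k)g_{k-1}\cdots g_1 \sim g_1\cdots g_{k-1}g_{k-1}\cdots g_1 \sim \cdots \sim e_n, $$
and symmetrically $D^{-}D \sim e_n$, so that $[D^{-}]$ is a two-sided inverse of $[D]$.

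The main obstacle is the decomposition step: one must argue carefully that a diagram can be isotoped so that its crossings occur at distinct heights, and that the crossing-free portions between consecutive levels can be straightened to vertical strands without changing the equivalence class. This is the standard general position argument familiar from the braid and virtual braid settings, and once it is in place the existence of inverses reduces to the purely formal cancellation of an involutive word displayed above.
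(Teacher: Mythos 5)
Your proposal is correct and takes essentially the same approach as the paper's proof: both identify the crossing-free diagram as the identity, isotope an arbitrary diagram so its crossings lie at distinct levels to write it as a word in the generators $\tilde{s}_i$ and $\tilde{\rho}_i$, invoke the moves of Figure \ref{ReidemeisterMoves} to see that each generator squares to the identity, and then take the reversed word as the two-sided inverse. The only difference is one of exposition---you spell out the slicing and straightening argument in more detail than the paper, which simply asserts the decomposition after the general position step.
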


\begin{proof}
We begin by noting that the virtual twin represented by a diagram of $n$ strands with no crossings is the identity element of the set $\mathcal{VT}_n$ of virtual twins. For each $i=1, 2, \dots , n-1$, let us define $\tilde s_i$ and $\tilde \rho_i$ to be the virtual twins represented by diagrams as in Figure \ref{Generators}. Let $\beta$ be any arbitrary element in $\mathcal{VT}_n$. Then after applying isotopies of the plane $\beta$ can be represented by a diagram $D \subset \mathbb{R} \times [0,1]$ such that the projection $\mathbb{R} \times [0,1] \to [0,1]$ restricted to the set $V(D)$ of all crossings is injective,  that is, each crossing is at a distinct level.  Further, it follows from the moves given in Figure \ref{ReidemeisterMoves} that ${\tilde s_i}^{2}=1$ and $\tilde \rho_i^{2}=1$ for all $i= 1, 2, \dots , n-1$. Thus, we can write $\beta={\tilde{s}_{i_1}}^{\epsilon_1}\tilde{\rho}_{i_2}^{\epsilon_2} \dots \tilde{s}_{i_k}^{\epsilon_k}$ for some $k$, where $\epsilon_i \in \{0,1\}$. Since $\tilde{s}_i$ and $\tilde{\rho}_i$ are self inverses, the element $\beta$ has the inverse $\tilde{s}_{i_k}^{\epsilon_k} \dots \tilde{\rho}_{i_2}^{\epsilon_2}{\tilde{s}_{i_1}}^{\epsilon_1}$. 
\end{proof}

\begin{proposition}
The diagrammatic group $\mathcal{VT}_n$ and the abstract group $VT_n$ are isomorphic for all $n \geq 2$.
\end{proposition}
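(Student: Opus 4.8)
The plan is to construct mutually inverse homomorphisms between the two groups. First I would define a map $\phi \colon VT_n \to \mathcal{VT}_n$ on generators by $s_i \mapsto \tilde s_i$ and $\rho_i \mapsto \tilde \rho_i$. To see that $\phi$ extends to a group homomorphism, it suffices to check that the diagrammatic generators $\tilde s_i, \tilde\rho_i$ satisfy every defining relation \eqref{virtual-twin-relations} of $VT_n$ inside $\mathcal{VT}_n$. Each such relation is a purely local statement about crossings and can be verified by exhibiting an explicit finite sequence of the moves of Figure \ref{ReidemeisterMoves} (together with planar isotopy) transforming the diagram of the left-hand word into that of the right-hand word. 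The involutory relations $\tilde s_i^{2} = \tilde\rho_i^{2} = 1$ are already recorded in the proof of the preceding lemma; the far-commutativity relations hold because strands with indices differing by at least two occupy disjoint vertical bands, so their crossings can be slid past one another by isotopy; and the three braid-type relations $\tilde\rho_i\tilde\rho_{i+1}\tilde\rho_i = \tilde\rho_{i+1}\tilde\rho_i\tilde\rho_{i+1}$, $\tilde\rho_i \tilde s_j = \tilde s_j \tilde\rho_i$ for $|i-j|\ge 2$, and the mixed relation $\tilde\rho_i\tilde\rho_{i+1}\tilde s_i = \tilde s_{i+1}\tilde\rho_i\tilde\rho_{i+1}$ correspond precisely to the virtual and mixed Reidemeister moves in Figure \ref{ReidemeisterMoves}.

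Next I would construct a candidate inverse $\psi\colon \mathcal{VT}_n \to VT_n$. Given a virtual twin $\beta$, represent it by a diagram $D$ in general position so that the projection to $[0,1]$ is injective on the crossing set $V(D)$, exactly as in the lemma; reading the crossings of $D$ from top to bottom yields a word in the $\tilde s_i, \tilde\rho_i$, and replacing each $\tilde s_i$ by $s_i$ and each $\tilde\rho_i$ by $\rho_i$ produces an element $\psi(\beta) \in VT_n$. The content of this step is to prove that $\psi$ is well defined, i.e.\ that the resulting element of $VT_n$ does not depend on the chosen diagram. This requires two checks. The first is that different height-orderings of the crossings of a fixed diagram give the same word: the only ambiguity arises when two crossings lie in disjoint vertical bands and their relative heights are exchanged, and this is absorbed by a far-commutativity relation of $VT_n$. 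The second, and main, check is that the two words obtained from diagrams related by a single move of Figure \ref{ReidemeisterMoves} are equal in $VT_n$; one goes through the finite list of moves and matches each one with the corresponding defining relation in \eqref{virtual-twin-relations}. Since $\psi$ respects the diagrammatic product (concatenation of words) and sends the empty diagram to the identity, it is a group homomorphism.

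Finally, $\phi$ and $\psi$ are mutually inverse: by construction $\psi\big(\phi(s_i)\big) = s_i$ and $\psi\big(\phi(\rho_i)\big) = \rho_i$, so $\psi\circ\phi = \id_{VT_n}$; conversely $\phi\big(\psi(\beta)\big) = \beta$, because reading off the word and then redrawing the corresponding product of generator-diagrams recovers a diagram equivalent to the original $D$. Hence $\phi$ is an isomorphism. I expect the main obstacle to be the well-definedness of $\psi$ — specifically, verifying that each move in Figure \ref{ReidemeisterMoves} is accounted for by a relation of $VT_n$ and, equally importantly, confirming that the moves force \emph{no additional} relation beyond those in \eqref{virtual-twin-relations}. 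This is precisely where the exclusion of the forbidden moves of Figure \ref{ForbiddenMoves} matters (cf.\ Proposition \ref{Justification For Forbidden Moves}): a careless treatment here could either overlook a permitted move or silently impose a relation absent from the abstract presentation, in either case giving the wrong group.
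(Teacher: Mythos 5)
Your proposal is correct and follows essentially the same route as the paper: the paper likewise defines a homomorphism $f_n \colon VT_n \to \mathcal{VT}_n$ on generators by checking the relations diagrammatically, defines the word-reading map $g_n \colon \mathcal{VT}_n \to VT_n$, and establishes its well-definedness move by move exactly as you outline (far commutativity for height exchanges, $s_i^2=\rho_i^2=1$ for crossing creation/cancellation, and the braid-type and mixed relations for the remaining moves). The only cosmetic difference is that the paper concludes via $g_n \circ f_n = \id$ together with surjectivity of $f_n$, whereas you verify both compositions are the identity; these amount to the same argument.
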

\begin{proof}
It follows from the definition of equivalence of two virtual twin diagrams on $n$ strands that the generators $\tilde s_i$ and $\tilde{\rho_i}$ satisfy the following relations.
\begin{eqnarray*}
\tilde{s}_i^{2} &=&1 ~~ \textrm{for } i = 1, 2, \dots, n-1,\\
\tilde{s}_i\tilde{s}_j &=& \tilde{s}_j\tilde{s}_i ~~ \textrm{for } |i - j| \geq 2,\\
\tilde{\rho}_i^{2} &=& 1 ~~ \textrm{for } i = 1, 2, \dots, n-1,\\
\tilde{\rho}_i\tilde{\rho}_j &=& \tilde{\rho}_j\tilde{\rho}_i ~~ \textrm{for } |i - j| \geq 2,\\
\tilde{\rho}_i\tilde{\rho}_{i+1}\tilde{\rho}_i &=& \tilde{\rho}_{i+1}\tilde{\rho}_i\tilde{\rho}_{i+1}~ ~\textrm{for } i = 1, 2, \dots, n-2,\\
\tilde{\rho}_i\tilde{s}_j &=& \tilde{s}_j\tilde{\rho}_i ~~ \textrm{for } |i - j| \geq 2,\\
\tilde{\rho}_i\tilde{\rho}_{i+1} \tilde{s}_i &=& \tilde{s}_{i+1} \tilde{\rho}_i \tilde{\rho}_{i+1}~ ~\textrm{for } i = 1, 2, \dots, n-2.
\end{eqnarray*}
Thus, there exists a unique group homomorphism $$f_n : VT_n \to \mathcal{VT}_n$$ given by $f_n(s_i)=\tilde{s_i}$ and $f_n(\rho_i)=\tilde{\rho_i}$ for $i= 1, 2, \dots, n-1$. Since every $\beta \in \mathcal{VT}_n$ can be written as a product of $\tilde s_i$ and $\tilde{\rho_i}$, the map $f_n$ is surjective. For an element $\tilde{s}_{i_1}^{\epsilon_1}\tilde{\rho}_{i_2}^{\epsilon_2} \dots \tilde{s}_{i_k}^{\epsilon_k} \in \mathcal{VT}_n$, where $\epsilon_i \in \{0,1\}$, define $$g_n : \mathcal{VT}_n \to VT_n$$ by $g_n\big(\tilde{s}_{i_1}^{\epsilon_1}\tilde{\rho}_{i_2}^{\epsilon_2} \dots \tilde{s}_{i_k}^{\epsilon_k} \big)=s_{i_1}^{\epsilon_1}\rho_{i_2}^{\epsilon_2} \dots s_{i_k}^{\epsilon_k}$. We prove that $g_n$ is well-defined. Let $D$ be a virtual twin diagram representing the element $\tilde{s}_{i_1}^{\epsilon_1}\tilde{\rho}_{i_2}^{\epsilon_2} \dots \tilde{s}_{i_k}^{\epsilon_k}$.  A diagram obtained by a planar isotopy on $D$ that does not change the order of the image of $V(D)$ in $[0, 1]$ under the projection map  $\mathbb{R} \times [0,1] \to [0,1]$ is again represented by the element $\tilde{s}_{i_1}^{\epsilon_1}\tilde{\rho}_{i_2}^{\epsilon_2} \dots \tilde{s}_{i_k}^{\epsilon_k}$.  Any move that interchanges two points in the image of $V(D)$ under the projection $\mathbb{R} \times [0,1] \to [0,1]$ exchanges the subwords $\tilde{s}_i\tilde{s_j}$ and  $\tilde{s}_j\tilde{s_i}$, $\tilde{s}_i\tilde{\rho}_j$ and $\tilde{\rho}_j\tilde{s}_i$ or $\tilde{\rho}_i\tilde{\rho}_j$ and $\tilde{\rho}_j\tilde{\rho}_i$ in the word $\tilde{s}_{i_1}^{\epsilon_1}\tilde{\rho}_{i_2}^{\epsilon_2} \dots \tilde{s}_{i_k}^{\epsilon_k}$ for some $|i-j| \geq 2$. Under each of these cases, the images of the  corresponding words under $g_n$ are the same element in $VT_n$. The move that adds (respectively, removes) two points in $V(D)$ adds (respectively, removes) subwords of the form $\tilde{s}_i\tilde{s_i}$  or  $\tilde{\rho}_i\tilde{\rho}_i$ in the word $\tilde{s}_{i_1}^{\epsilon_1}\tilde{\rho}_{i_2}^{\epsilon_2} \dots \tilde{s}_{i_k}^{\epsilon_k}$.  But $s_i^2=1=\rho_i^2$ in $VT_n$, and hence both the words are mapped to same element under $g_n$. The third move interchanges the subwords $\tilde{\rho}_i\tilde{\rho}_{i+1}\tilde{\rho}_i$ and $\tilde{\rho}_{i+1}\tilde{\rho}_i\tilde{\rho}_{i+1}$ in the word $\tilde{s}_{i_1}^{\epsilon_1}\tilde{\rho}_{i_2}^{\epsilon_2} \dots \tilde{s}_{i_k}^{\epsilon_k}$. But $VT_n$ has the relation $\rho_i\rho_{i+1}\rho_i = \rho_{i+1}\rho_i\rho_{i+1}$. Finally, the last move replaces the subwords $\tilde{\rho}_i\tilde{\rho}_{i+1}\tilde{s}_i$ and $\tilde{s}_{i+1}\tilde{\rho}_i\tilde{\rho}_{i+1}$, but $VT_n$ has the relation $\rho_i\rho_{i+1} s_i = s_{i+1} \rho_i \rho_{i+1}$, and hence $g_n$ is well-defined. Since $g_n \circ f_n = \id$, $f_n$ is injective and the proof is complete.
 \end{proof}
 
Since the diagrammatic group $\mathcal{VT}_n$ and the abstract group $VT_n$ have been identified, from now onwards, the generators $s_i$ and $\rho_i$ will be represented geometrically as in Figure \ref{Generators}.

A representation $\mu_n: T_n \to \Aut(F_n)$, from the twin group to the automorphisms group of the free group, has been constructed in \cite[Theorem 7.1]{NaikNandaSingh1}. It turns out that $\mu_n$ extends easily to a representation of $VT_n$. 

\begin{proposition}\label{VTn-representation}
The map $\mu_n: VT_n \to \Aut(F_n)$ defined by the action of generators of $VT_n$ by
 \[ \mu_n(s_i) :
 \begin{cases}
 x_i \mapsto x_ix_{i+1},\\
 x_{i+1} \mapsto x_{i+1}^{-1},\\
 x_j \mapsto x_j, \quad j \neq i, i+1,\\
 \end{cases}
 \]
 \[ \mu_n(\rho_i) :
 \begin{cases}
 x_i \mapsto x_{i+1},\\
 x_{i+1} \mapsto x_i\\
 x_j \mapsto x_j, \quad j \neq i, i+1,\\
 \end{cases}
 \]
 is a representation of $VT_n$.
\end{proposition}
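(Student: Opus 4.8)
The plan is to invoke the universal property of the presentation of $VT_n$ given in \eqref{virtual-twin-relations}. Since $\Aut(F_n)$ is a group and $\mu_n$ is prescribed only on the generating set $\{s_1,\dots,s_{n-1},\rho_1,\dots,\rho_{n-1}\}$, the assignment extends to a (necessarily unique) homomorphism $VT_n \to \Aut(F_n)$ if and only if the prescribed images satisfy every defining relation of $VT_n$. Thus the work splits into two parts: first, confirming that each $\mu_n(s_i)$ and each $\mu_n(\rho_i)$ is genuinely an automorphism of $F_n$ (so that the codomain is correct), and second, verifying the seven families of relations.

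For the first part I would note that $\mu_n(\rho_i)$ is the endomorphism of $F_n$ induced by the transposition $(i,i+1)$ of the free basis $x_1,\dots,x_n$, which is visibly bijective; in fact the assignment $\rho_i \mapsto (i,i+1)$ realises the subgroup generated by the $\rho_i$ as the standard permutation action of $S_n$ on $F_n$. For $\mu_n(s_i)$, a one-line computation shows $\mu_n(s_i)^2 = \id$, since $x_i \mapsto x_i x_{i+1} \mapsto (x_i x_{i+1})x_{i+1}^{-1} = x_i$ and $x_{i+1} \mapsto x_{i+1}^{-1} \mapsto x_{i+1}$, while the remaining generators are fixed. Hence $\mu_n(s_i)$ is an involution and in particular an automorphism. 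This observation simultaneously discharges the relations $s_i^2 = 1$ and $\rho_i^2 = 1$.

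For the second part, the relations are organised by their support. The far-commutativity relations $s_i s_j = s_j s_i$, $\rho_i \rho_j = \rho_j \rho_i$, and $\rho_i s_j = s_j \rho_i$ for $|i-j|\ge 2$ hold at once, because in each case the two automorphisms alter disjoint blocks of basis elements (namely $\{x_i,x_{i+1}\}$ versus $\{x_j,x_{j+1}\}$) and therefore commute. The braid relation $\rho_i \rho_{i+1}\rho_i = \rho_{i+1}\rho_i \rho_{i+1}$ is exactly the corresponding relation among transpositions in $S_n$ under the permutation action, so it transfers for free. The relations among the $s_i$ alone are already known to hold, since the restriction of $\mu_n$ to $\langle s_1,\dots,s_{n-1}\rangle$ is precisely the representation $\mu_n\colon T_n \to \Aut(F_n)$ of \cite[Theorem 7.1]{NaikNandaSingh1}.

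The only genuinely new computation, and the step I expect to carry all the weight, is the mixed relation $\rho_i \rho_{i+1} s_i = s_{i+1}\rho_i\rho_{i+1}$. I would verify it by evaluating both sides on the three basis elements $x_i, x_{i+1}, x_{i+2}$ touched by the supports (every other $x_j$ being fixed by both sides), composing the automorphisms from right to left. A direct check then shows that both $\mu_n(\rho_i \rho_{i+1} s_i)$ and $\mu_n(s_{i+1}\rho_i\rho_{i+1})$ send
\[
x_i \mapsto x_{i+1}x_{i+2}, \qquad x_{i+1}\mapsto x_{i+2}^{-1}, \qquad x_{i+2}\mapsto x_i,
\]
and fix all remaining generators, so the two automorphisms coincide. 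With every defining relation verified, the universal property of the presentation produces the desired homomorphism $\mu_n\colon VT_n \to \Aut(F_n)$, completing the proof.
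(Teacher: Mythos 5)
Your proof is correct, and it coincides with the paper's intended argument: the paper offers no written proof of this proposition, merely remarking that the representation of $T_n$ from \cite[Theorem 7.1]{NaikNandaSingh1} ``extends easily'' to $VT_n$, so the routine relation-check via the universal property of the presentation is exactly what is being left to the reader. Your verification is sound — in particular the only nontrivial computation, the mixed relation $\rho_i\rho_{i+1}s_i = s_{i+1}\rho_i\rho_{i+1}$, does come out as you state, with both sides sending $x_i \mapsto x_{i+1}x_{i+2}$, $x_{i+1} \mapsto x_{i+2}^{-1}$, $x_{i+2} \mapsto x_i$ and fixing the remaining generators.
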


As a consequence of Proposition \ref{VTn-representation}, it follows that the forbidden moves in Figure \ref{ForbiddenMoves} cannot be obtained from the moves in Figure \ref{ReidemeisterMoves}.

\begin{proposition}\label{Justification For Forbidden Moves}
The following holds in $VT_n$:
\begin{enumerate}
\item $s_i s_{i+1} s_{i} \neq s_{i+1} s_i s_{i+1}.$
\item $\rho_i s_{i+1} s_{i} \neq s_{i+1} s_i \rho_{i+1}.$
\end{enumerate}
\end{proposition}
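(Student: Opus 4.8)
The plan is to use the representation $\mu_n : VT_n \to \Aut(F_n)$ of Proposition \ref{VTn-representation} as a separating invariant: since $\mu_n$ is a group homomorphism, to prove that two words represent distinct elements of $VT_n$ it suffices to exhibit a single free generator $x_j$ on which the two corresponding automorphisms of $F_n$ disagree. Only the indices $i,i+1,i+2$ are moved by the generators appearing in the statement, so I would abbreviate $a = x_i$, $b = x_{i+1}$, $c = x_{i+2}$ and record the relevant maps as $\mu_n(s_i): a \mapsto ab,\ b \mapsto b^{-1}$; $\mu_n(s_{i+1}): b \mapsto bc,\ c \mapsto c^{-1}$; $\mu_n(\rho_i): a \leftrightarrow b$; and $\mu_n(\rho_{i+1}): b \leftrightarrow c$, with all remaining letters fixed. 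Throughout I would adopt the convention that in a product the rightmost generator is applied first.

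For part (1), I would evaluate both sides on $x_i$. A short computation gives $\mu_n(s_i s_{i+1} s_i)(x_i) = x_i x_{i+2}$, whereas $\mu_n(s_{i+1} s_i s_{i+1})(x_i) = x_i x_{i+1} x_{i+2}$. These are distinct reduced words in $F_n$, so the two automorphisms differ, and hence $s_i s_{i+1} s_i \neq s_{i+1} s_i s_{i+1}$ in $VT_n$. For part (2), the same strategy applied to $x_i$ yields $\mu_n(\rho_i s_{i+1} s_i)(x_i) = x_{i+1} x_i x_{i+2}$ and $\mu_n(s_{i+1} s_i \rho_{i+1})(x_i) = x_i x_{i+1} x_{i+2}$. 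Since $x_{i+1} x_i \neq x_i x_{i+1}$ in the free group, these images differ and the two elements of $VT_n$ are distinct.

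The calculation is routine; the only points demanding care are fixing the composition convention for $\mu_n$ and correctly transporting inverse letters through each generator (so that, for instance, $\mu_n(s_i)(b^{-1}) = b$ and $\mu_n(s_{i+1})(b^{-1}) = c^{-1}b^{-1}$). The genuinely instructive point, which I regard as the crux, is that the non-abelian structure of $F_n$ is essential for part (2): both words abelianise to $x_i + x_{i+1} + x_{i+2}$, and both pairs have the same image $(i,\,i+2)$ under the natural surjection $VT_n \to S_n$, so neither the linearised (Burau-type) reduction of $\mu_n$ nor the permutation image can detect these forbidden moves. It is precisely the failure of $x_{i+1}x_i = x_i x_{i+1}$ in $F_n$ that lets $\mu_n$ see the difference, and choosing $x_i$ as the test generator is what exposes it most cleanly.
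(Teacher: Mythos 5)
Your proposal is correct and is essentially the paper's own proof: the paper likewise applies the representation $\mu_n$ of Proposition \ref{VTn-representation} and observes that the two automorphisms in each pair disagree on the generator $x_i$, leaving the computation as an ``easy check.'' Your write-up simply makes that check explicit (with the correct values $x_ix_{i+2}$ versus $x_ix_{i+1}x_{i+2}$, and $x_{i+1}x_ix_{i+2}$ versus $x_ix_{i+1}x_{i+2}$), so there is nothing to add.
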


\begin{proof}
An easy check gives $$\mu_n(s_i s_{i+1} s_{i})(x_i) \neq  \mu_n(s_{i+1} s_{i} s_{i+1})(x_i)$$ and 
$$\mu_n(\rho_i s_{i+1} s_{i})(x_i) \neq  \mu_n(s_{i+1} s_{i} \rho_{i+1})(x_i)$$ for each $i$.
\end{proof}

\section{Virtual doodle diagrams}\label{virtual-doodle-diagram}
 A \textit{virtual doodle diagram} is a generic immersion of a closed one-dimensional manifold (disjoint union of circles) on the plane $\mathbb{R}^2$ with finitely many real or virtual crossings (as in Figure \ref{Crossings}) such that there are no triple or higher real intersection points.
 
 \begin{example}
 An example of a virtual doodle is shown in Figure \ref{KishinoDoodle}. The figure  represents a flat virtual knot called the flat Kishino knot which was proved to be non-trivial as a flat virtual knot in \cite{FennTuraev2007, Kadokami2003}.  Thus, the flat Kishino knot is also non-trivial as a virtual doodle. Note that, the original Kishino knot diagram is a diagram of a virtual knot and its non-triviality as a virtual knot   is proven, for example, in
\cite{BartholomewFenn2008, KishinoSatoh2004}.

 \begin{figure}[hbtp]
 \centering
 \includegraphics[scale=0.4]{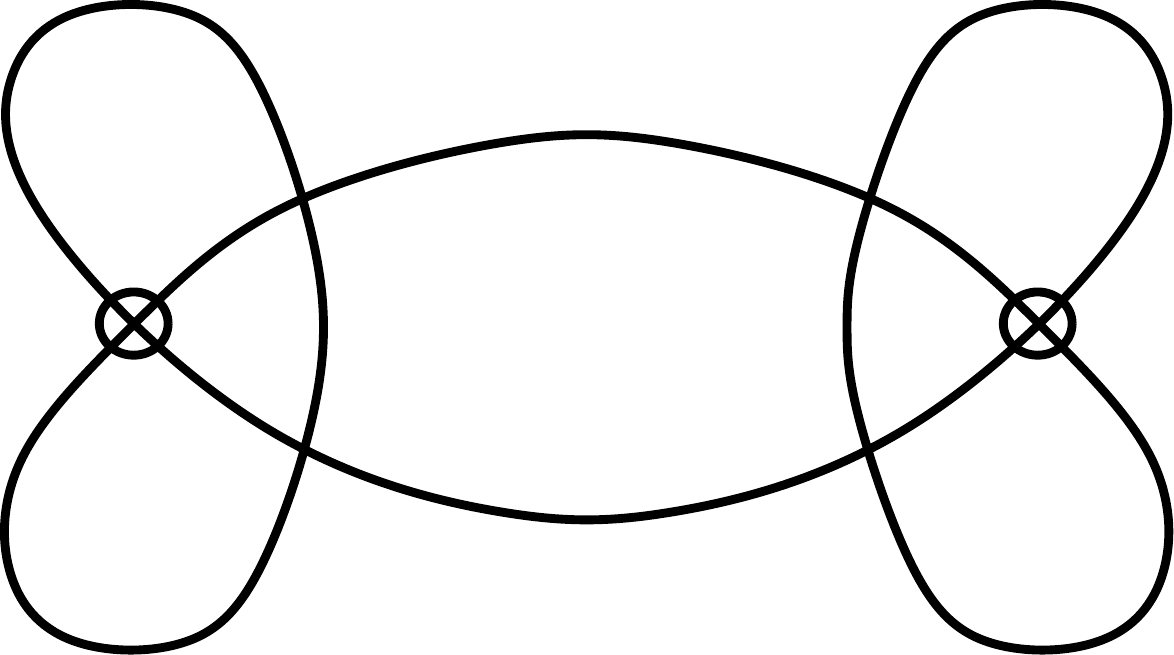}
 \caption{Flat Kishino knot as virtual doodle}
 \label{KishinoDoodle}
 \end{figure}
\end{example}
 
Two virtual doodle diagrams are \textit{equivalent} if they are related by a finite sequence of $R_1$, $R_2$, $VR_1$, $VR_2$, $VR_3$, $M$ moves as shown in Figure \ref{ReidemeisterMovesForDoodles} and isotopies of the plane. Note that $VR_1$, $VR_2$, $VR_3$ and $M$ are flat versions of virtual Reidemeister moves in virtual knot theory \cite{Kauffman1999}. The moves $R_1$ and $R_2$ are referred as flat versions of Reidemeister moves for classical knots \cite{BartholomewFennKamada2018-2}.
 
 \begin{figure}[hbtp]
 \centering
 \includegraphics[scale=0.3]{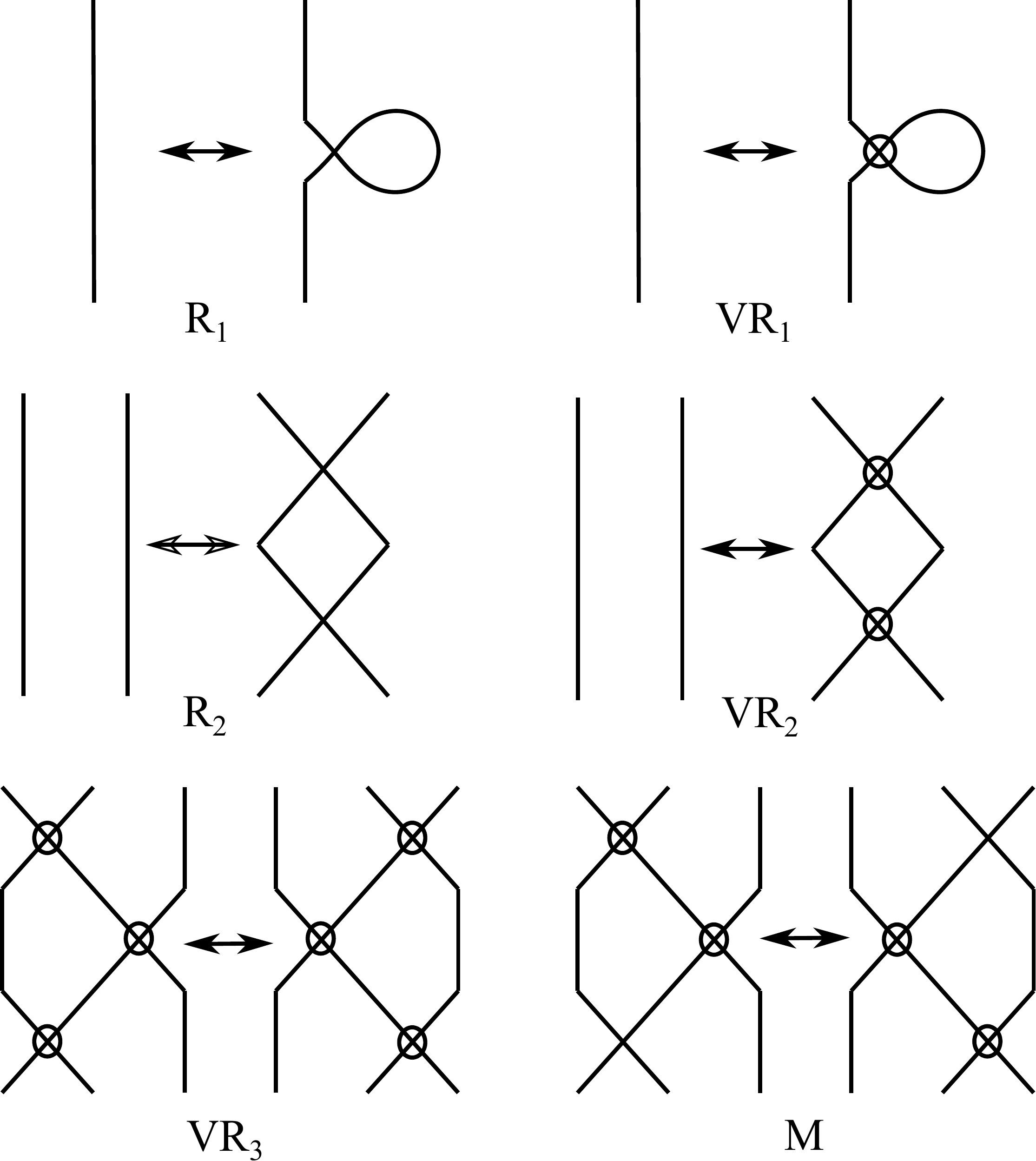}
\caption{Moves for virtual doodle diagrams}
\label{ReidemeisterMovesForDoodles}
 \end{figure}

An {\it oriented virtual doodle diagram} is a doodle diagram with an orientation on each component of the underlying immersion. It is easy to see that there are a total of  $28$ moves for oriented  virtual doodle diagrams. Further, any oriented move can be obtained as a composition of moves in Figure \ref{OrientedReidemeisterMoves} and planar isotopies. From here onwards, by a virtual doodle diagram we mean an oriented virtual doodle diagram unless stated otherwise.
 
 \begin{figure}[hbtp]
 \centering
 \includegraphics[scale=0.15]{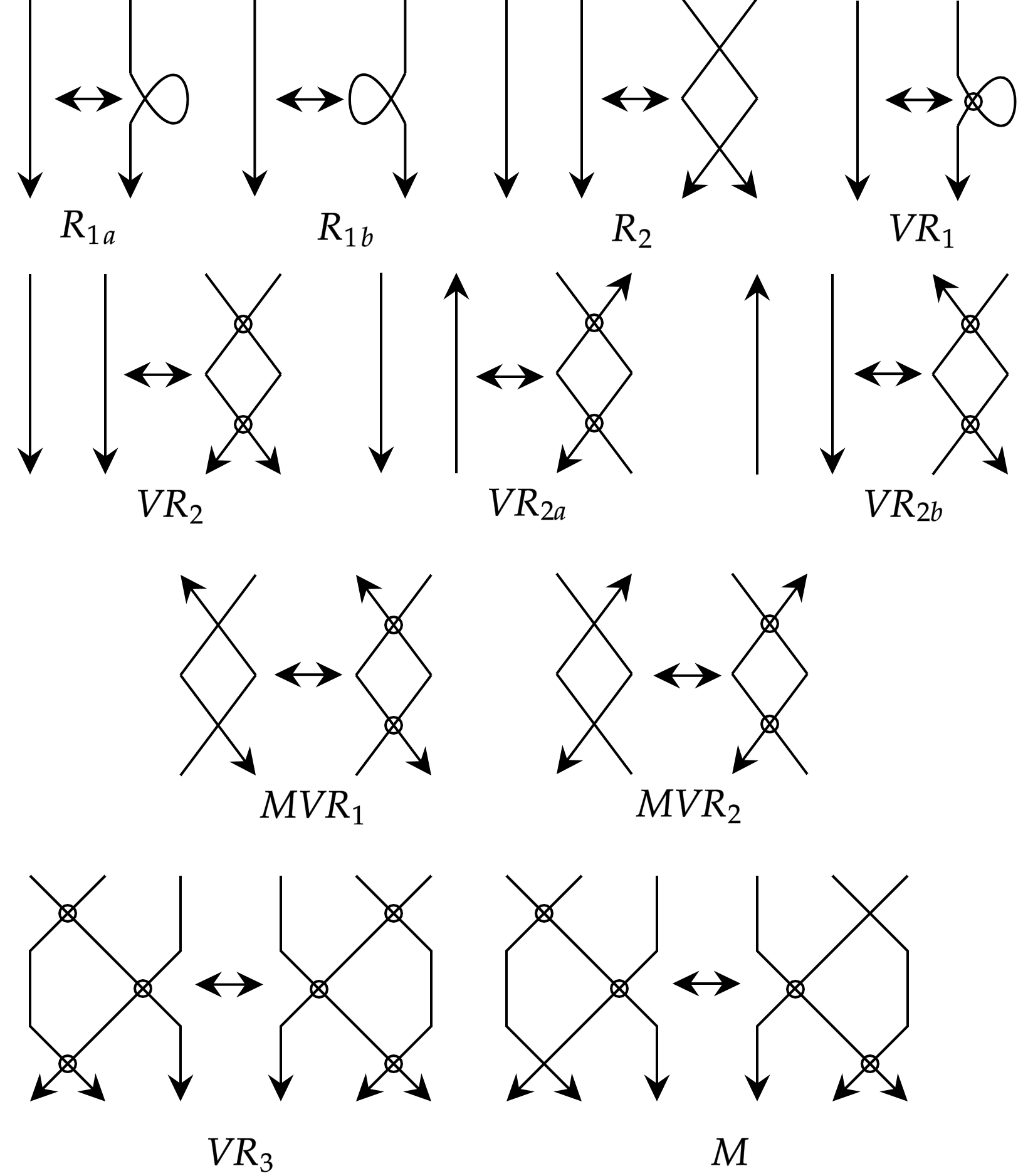}
 \caption{Moves for oriented virtual doodle diagrams}
 \label{OrientedReidemeisterMoves}
 \end{figure}
 
It is known due to \cite{BartholomewFennKamada2018} that there is a natural bijection between the set of oriented (or unoriented) virtual doodles on the plane and the set of oriented (or unoriented) doodles on surfaces. This is an analogue of a similar fact that there is a natural bijection between the set of oriented (or unoriented) virtual knots and the set of stable equivalent classes of oriented (or unoriented) knot diagrams on surfaces \cite{CarterKamadaSaito2002, KamadaKamada2000, Kuperberg2003}.
 
{ \bf Gauss data.} Let $K$ be a virtual doodle diagram on the plane with $n$ real crossings. Let $N_1, N_2, \ldots, N_n$ be closed $2$-disks each  enclosing exactly one real crossing of the diagram $K$ and $W(K)$ the closure of $\mathbb{R}^2 \setminus \cup_{i=1}^n N_i$ in the plane. Note that $W(K)$ consists of immersed arcs and loops in the plane where the intersection points are the virtual crossings.  Let $V_R(K)$ be the set of real crossings of $K$. Since we are considering oriented virtual doodle diagrams, for each real crossing $c_i$, the set $\partial N_i \cap c_i$ consists of four points and are assigned symbols as in Figure \ref{LabellingsOnRealCrossing}. 

\begin{figure}[hbtp]
\centering
\includegraphics[scale=0.36]{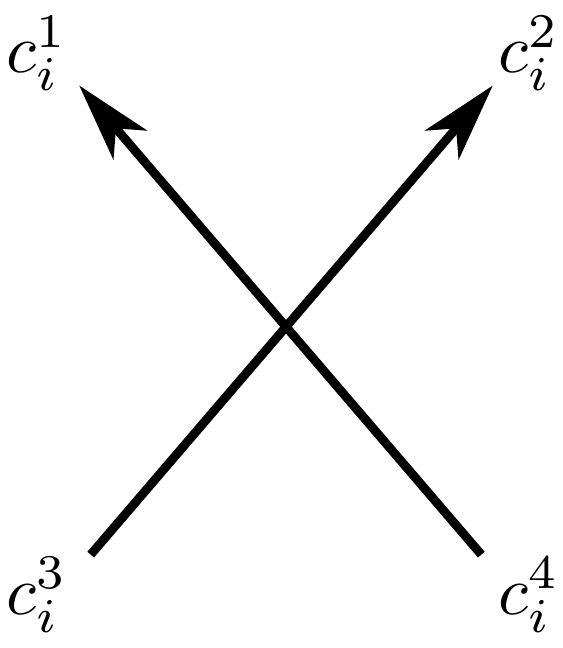}
\caption{Labelling at real crossing}
\label{LabellingsOnRealCrossing}
\end{figure}

Define $$V_\partial (K) = \big\{ c^j _i ~|~ i = 1, 2, \dots, n~ \textrm{and } j = 1, 2, 3, 4 \big\}$$ and 
\begin{eqnarray*}
X(K) &=& \big\{(a,b) \in V_\partial (K) \times V_\partial (K)~|~\textrm{there is an arc in $K \cap W(K)$ starting}\\
&&\textrm{at $a$ and ending at $b$} \big\}.
\end{eqnarray*}
We define the \textit{Gauss data} of a virtual doodle diagram $K$ to be the pair $\big(V_R(K), X(K)\big)$. See \cite[Section 6]{BartholomewFennKamada2018} for a related discussion. The Gauss data will be crucial in establishing Alexander and Markov theorems for virtual doodles which we prove in the remaining two sections. 

Let $K$ and $K'$ be two virtual doodle diagrams each with $n$ real crossings. We say that $K$ and $K'$ have the \textit{same Gauss data} if there is a bijection $\sigma: V_R(K) \to V_R(K')$ such that whenever $(a,b) \in X(K)$, then $\big(\bar{\sigma}(a), \bar{\sigma}(b)\big) \in X(K')$, where $\bar{\sigma}: V_\partial (K) \to V_\partial (K')$ is defined as 
$$\bar{\sigma}(c^j_i) = {\sigma(c_i)}^j.$$

The following result is proved in \cite[Lemma 6.1]{BartholomewFennKamada2018}. 

\begin{lemma}\label{MainLemma}
Let $K$ and $K'$ be virtual doodle diagrams with the same number of real crossings. Then the following are equivalent:
\begin{enumerate}
\item $K$ and $K'$ have the same Gauss data,
\item $K$ and $K'$ are related by a finite sequence of $VR_1$, $VR_2$, $VR_3$, $M$ moves and isotopies of the plane,
\item $K$ and $K'$ are related by a finite sequence of Kauffman's detour moves (shown in Figure \ref{DetourMove}) and isotopies of the plane.

\end{enumerate}
\end{lemma}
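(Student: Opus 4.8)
The plan is to prove the two equivalences $(2)\Leftrightarrow(3)$ and $(1)\Leftrightarrow(3)$ separately, since they are of genuinely different character. The equivalence $(2)\Leftrightarrow(3)$ is a purely diagrammatic statement asserting that the moves $VR_1,VR_2,VR_3,M$ of Figure \ref{ReidemeisterMovesForDoodles} generate the same equivalence relation as Kauffman's single detour move of Figure \ref{DetourMove}. The equivalence $(1)\Leftrightarrow(3)$ is the assertion that the detour move encodes precisely the freedom to reroute the arcs of $K$ lying in the complement $W(K)$ of the real-crossing disks, leaving the combinatorial pairing $X(K)$ and the set $V_R(K)$ untouched. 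Chaining these gives the full statement.

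For $(2)\Leftrightarrow(3)$, the forward implication is immediate: each of $VR_1,VR_2,VR_3$ and the mixed move $M$ is itself a detour move performed on a short arc carrying only virtual crossings, so any sequence of such moves is a sequence of detour moves. For the reverse implication I would take a single detour move, which slides an arc $\alpha$ (meeting the rest of the diagram only in virtual crossings) to a new arc $\alpha'$ with the same endpoints, and put $\alpha\cup\alpha'$ in general position with respect to the remaining strands. The region swept out by the homotopy from $\alpha$ to $\alpha'$ can then be subdivided so that, at each elementary stage, the arc is pushed across exactly one strand, one double point, or a single kink; each such elementary stage is realized by one of $VR_1,VR_2,VR_3,M$. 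This is the classical general-position factorization argument, and it is where the only real work lies.

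For $(1)\Leftrightarrow(3)$, the implication $(3)\Rightarrow(1)$ is direct: a detour move reroutes an arc inside $W(K)$ and only adds or deletes virtual crossings, so it fixes every real crossing, every boundary label $c_i^j$ on $\partial N_i$, and the pairing recorded in $X(K)$; hence the Gauss data $\big(V_R(K),X(K)\big)$ is preserved. For $(1)\Rightarrow(3)$ I would use the bijection $\sigma\colon V_R(K)\to V_R(K')$ to first apply a planar isotopy carrying each disk $N_i$ of $K$ onto the disk of $K'$ enclosing $\sigma(c_i)$, with matching local pictures; this is possible because the real-crossing type determines the cyclic arrangement of the four labeled points $c_i^1,\dots,c_i^4$ on $\partial N_i$, and $\bar\sigma(c_i^j)=\sigma(c_i)^j$ respects this arrangement. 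After this isotopy, $K$ and $K'$ agree inside $\bigcup_{i} N_i$ and each consists, in the complement, of disjoint arcs joining labeled boundary points; since $(a,b)\in X(K)$ forces $\big(\bar\sigma(a),\bar\sigma(b)\big)\in X(K')$, the arcs of $K$ and of $K'$ connect the same pairs of boundary points. I would then reroute the arcs of $K$ one at a time by detour moves so that each coincides with its counterpart in $K'$, turning $K$ into $K'$.

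The main obstacle is the general-position factorization in the reverse half of $(2)\Leftrightarrow(3)$: making precise that the homotopy realizing a detour move can always be cut into elementary steps crossing a single feature at a time requires a transversality argument, and one must check that no triple real intersection point is ever created in the process (which is automatic here because $\alpha$ meets the diagram only virtually). In the $(1)\Rightarrow(3)$ step the subtlety is only bookkeeping: rerouting one arc by a detour move must not disturb arcs already matched, but since a detour move creates only virtual crossings and may be carried out in the complement of the fixed disks $\bigcup_{i}N_i$, the arcs can be processed successively without conflict, so this is a matter of careful ordering rather than a genuine difficulty.
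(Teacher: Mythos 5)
First, a point of information: this paper does not prove Lemma \ref{MainLemma} at all --- the statement is quoted from \cite[Lemma 6.1]{BartholomewFennKamada2018}, and the sentence preceding it is the entire ``proof''. So there is no in-paper argument to compare yours against; the relevant comparison is with the standard argument of the cited reference, and your reconstruction follows that route: $(2)\Leftrightarrow(3)$ by observing that each of $VR_1$, $VR_2$, $VR_3$, $M$ is a detour along an arc carrying only virtual crossings and, conversely, by factoring a general detour move through general position into elementary events (tangency $=VR_2$, passing over a virtual crossing $=VR_3$, passing over a real crossing $=M$, kink $=VR_1$); and $(1)\Leftrightarrow(3)$ by noting that detours never touch $V_R(K)$ or the pairing $X(K)$ and, conversely, by isotoping the crossing disks of $K$ onto those of $K'$ compatibly with $\sigma$ and then rerouting the arcs of $K\cap W(K)$ one at a time, which is legitimate precisely because every intersection point created in $W(K)$ may be declared virtual. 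This is sound, and you correctly locate the only real technical work in the general-position factorization.

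The one genuine soft spot is a case your argument (and the statement as reproduced in this paper) ignores: components of $K$ containing no real crossing. The paper's own definition notes that $K\cap W(K)$ consists of arcs \emph{and loops}, but $X(K)$ records only arcs, so a closed component with only virtual crossings contributes nothing to the Gauss data. Since none of the moves in (2) or (3) can create or destroy a component of the underlying immersed $1$-manifold, a single crossing-free circle and a disjoint pair of such circles have the same (empty) Gauss data yet are not related by the moves; your step ``the arcs of $K$ and of $K'$ connect the same pairs of boundary points, so reroute them one at a time'' tacitly assumes every component of $K\cap W(K)$ is an arc with endpoints on the disks $\partial N_i$. A complete proof should either add the hypothesis that every component of $K$ contains a real crossing, or enlarge the Gauss data by the number of free loops and match those loops by detour moves as well (a free loop can be carried anywhere by detours, so this costs nothing). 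This is an edge case rather than a flaw in your strategy, but a self-contained write-up should address it explicitly.
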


\begin{figure}[hbtp]
\centering
\includegraphics[scale=0.15]{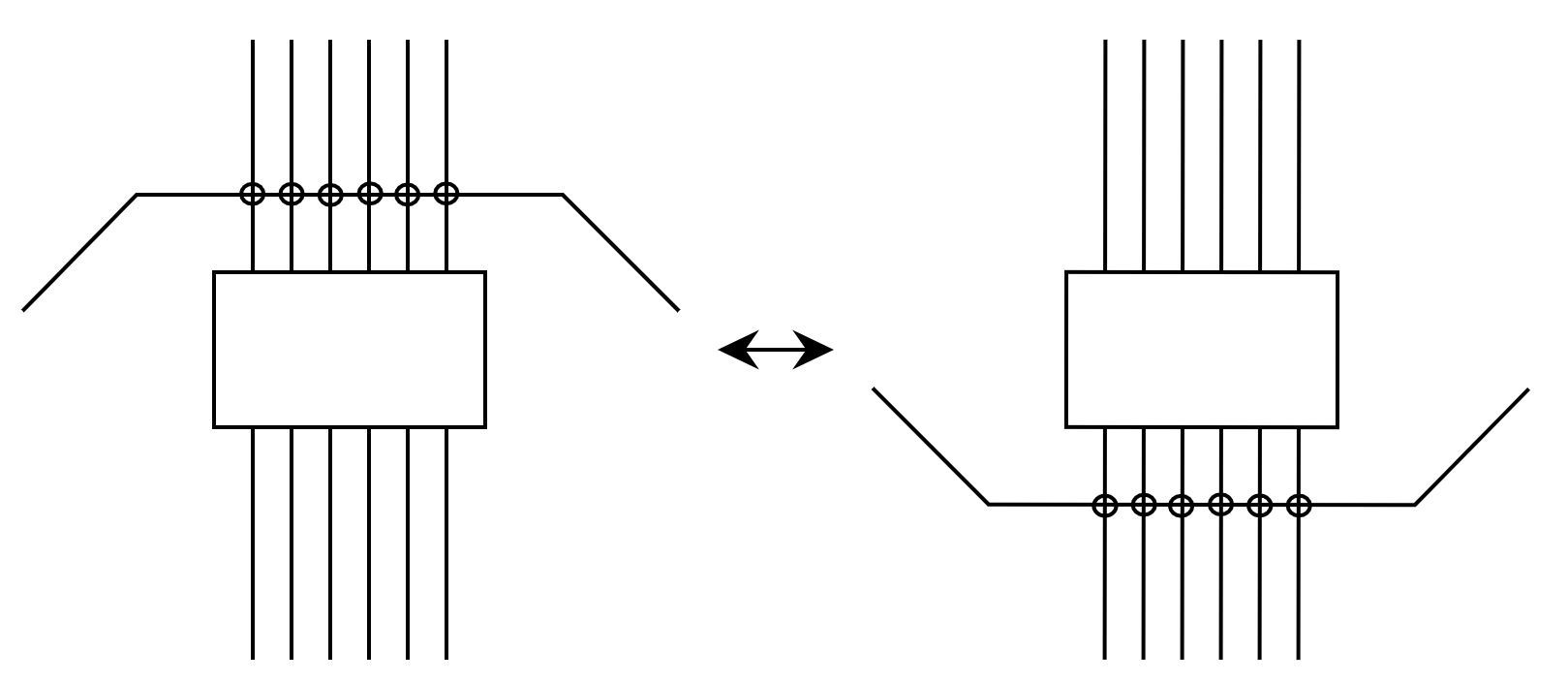}
\caption{Kauffman's detour move}
\label{DetourMove}
\end{figure}

\section{Alexander Theorem for virtual doodles}\label{Alexander-theorem-section}
Consider the space $\mathbb{R}^2 \setminus \mathbb{D}^{\circ}$, where $\mathbb{D}^{\circ}$ is the interior of the closed unit 2-disk $\mathbb{D}$ centred at the origin. A {\it closed virtual twin diagram} of degree $n$ is an oriented virtual doodle diagram $K$ on the plane satisfying the following:
\begin{enumerate}
\item $K$ is contained in $\mathbb{R}^2 \setminus \mathbb{D}^{\circ}$.
\item If $\pi : \mathbb{R}^2 \setminus \mathbb{D}^{\circ} \to \mathbb{S}^1$ is the radial projection and $k : \sqcup~ \mathbb{S}^1 \to \mathbb{R}^2 \setminus \mathbb{D}^{\circ}$ the underlying immersion of $K$, then $$\pi \circ k: \sqcup~ \mathbb{S}^1 \to \mathbb{S}^1$$ is an $n$-fold covering, where $\mathbb{S}^1$ is the boundary of $\mathbb{D}$ and we assume it to be oriented counterclockwise.
\item The map $\pi$ restricted to $V(K)$, the set of all crossings of $K$,  is injective.
\item The orientation of $K$ is compatible with a fixed orientation of $\mathbb{S}^1$.
\end{enumerate}

Consider a point $p \in \mathbb{S}^1$ such that $\pi^{-1}(p) \cap V(K) = \phi $. Then cutting along the ray emanating from the origin and passing through $p$ gives a virtual twin diagram on $n$ strands. The \textit{closure} of a virtual twin diagram on the plane is defined to be the doodle obtained from the diagram by joining the end points with non-intersecting curves as shown in Figure \ref{Closure}. We note that there are many ways of taking closure of a virtual twin diagram.
\begin{figure}[hbtp]
\centering
\includegraphics[scale=0.6]{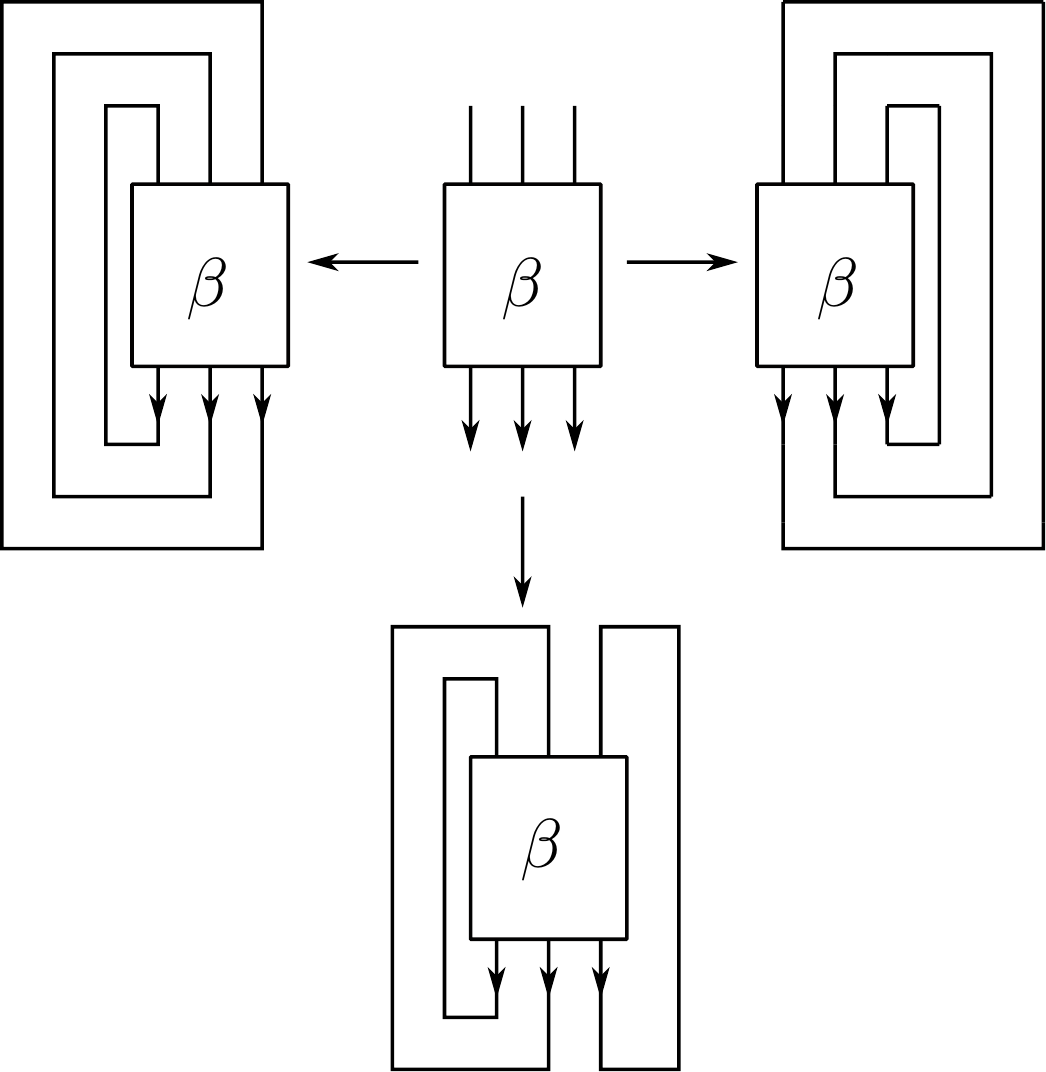}
\caption{Different closures of a virtual twin diagram}
\label{Closure}
\end{figure}

We observe that in the case of classical twins, due to forbidden move $s_is_{i+1}s_i \neq s_{i+1}s_1s_{i+1}$, taking closure of a twin diagram on a plane is not well-defined. The following result shows that the operation of taking closure on a plane in virtual setting is well-defined.

\begin{lemma}
Any two closures of a virtual twin diagram on the plane gives equivalent virtual doodle diagrams on the plane.
\end{lemma}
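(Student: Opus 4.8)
The plan is to reduce the statement to the Gauss data criterion of Lemma \ref{MainLemma}. The essential observation is that forming a closure of a given virtual twin diagram $\beta$ alters $\beta$ only away from its real crossings: the closure arcs join the top endpoints of $\beta$ to its bottom endpoints and, by construction, meet no real crossing, creating at most virtual crossings among themselves and with the body of $\beta$. Consequently, if $K$ and $K'$ denote two closures of $\beta$, then both are oriented virtual doodle diagrams whose real crossings are exactly those of $\beta$; in particular they have the same number of real crossings, and the obvious identification of these crossings provides a bijection $\sigma : V_R(K) \to V_R(K')$, which I would take to be the identity.

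First I would fix, once and for all, the local pictures around the real crossings, that is, the disks $N_i$ together with their four boundary labels $c^1_i, c^2_i, c^3_i, c^4_i$ as in Figure \ref{LabellingsOnRealCrossing}; these are identical for $K$ and $K'$, since neither the crossings nor their small neighbourhoods are touched by the choice of closure. It then remains to compare the arc-connectivity data $X(K)$ and $X(K')$. Each closure joins the $i$-th top endpoint of $\beta$ to the $i$-th bottom endpoint for every $i$, which is precisely what closing up the strands means, so the two closures realise the same abstract pairing of the boundary labels: a path running from a boundary point $a$ to a boundary point $b$ through the body of $\beta$ and through the connecting arcs occurs in $K \cap W(K)$ exactly when the corresponding path occurs in $K' \cap W(K')$. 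The specific routing of the closure arcs, and whatever virtual crossings it happens to create, is irrelevant to which pairs $(a,b)$ arise. Hence $(a,b) \in X(K)$ if and only if $\big(\bar{\sigma}(a), \bar{\sigma}(b)\big) \in X(K')$, so $K$ and $K'$ have the same Gauss data.

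With the two closures sharing the same Gauss data, the equivalence of conditions (1) and (2) in Lemma \ref{MainLemma} shows that $K$ and $K'$ are related by a finite sequence of $VR_1$, $VR_2$, $VR_3$, $M$ moves and isotopies of the plane. Since all of these are among the moves permitted for virtual doodle diagrams, $K$ and $K'$ are equivalent virtual doodle diagrams, as desired. The main point requiring care is the claim that $X(K)$ is insensitive to the routing of the closure arcs: one must argue that the prescription joining the $i$-th top endpoint to the $i$-th bottom endpoint pins down the pairing of boundary labels independently of how the arcs are actually drawn, so that the identity bijection $\sigma$ really does intertwine $X(K)$ and $X(K')$ in the sense required by the definition of same Gauss data. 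Once this bookkeeping is settled, Lemma \ref{MainLemma} supplies the conclusion at once.
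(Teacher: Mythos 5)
Your proof is correct and rests on the same key result as the paper's, namely Lemma \ref{MainLemma}; the only difference is which of its equivalent conditions you verify. The paper's proof is a two-line argument via condition (3): two closures of the same virtual twin diagram $\beta$ differ only in how the closure arcs are routed, hence are related by a finite sequence of Kauffman's detour moves, and Lemma \ref{MainLemma} then gives the equivalence. You instead verify condition (1): both closures have exactly the real crossings of $\beta$, and the arc-connectivity data $X(\cdot)$ is determined by the combinatorics of $\beta$ together with the prescription that the $i$-th top endpoint is joined to the $i$-th bottom endpoint, independently of the routing of the closure arcs and of whatever virtual crossings that routing creates; so the two closures have the same Gauss data under the identity bijection on real crossings, and the implication $(1) \Rightarrow (2)$ of Lemma \ref{MainLemma} finishes. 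The mathematical content is the same either way --- routing-independence of the closure --- but your version makes it explicit as a statement about combinatorial data, where the paper packages it geometrically as the detour move and is correspondingly terser.
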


\begin{proof}
Let $\beta$ be a virtual twin diagram and $K$ and $K'$  two different closures of $\beta$. Then $K$ and $K'$ are a finite sequence of Kauffman's detour move depicted in Figure \ref{DetourMove}. By Lemma \ref{MainLemma}, $K$ and $K'$ are equivalent virtual doodle diagrams on the plane. 
\end{proof}

We now prove  Alexander Theorem for virtual doodles.

\begin{theorem}\label{Alexender theorem}
Every oriented virtual doodle on the plane is equivalent to closure of a virtual twin diagram.
\end{theorem}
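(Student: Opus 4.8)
The plan is to adapt Alexander's classical braiding procedure to the flat virtual setting, using the radial projection $\pi \colon \mathbb{R}^2 \setminus \mathbb{D}^{\circ} \to \mathbb{S}^1$ from the definition of a closed virtual twin diagram as the analogue of the braid axis, and exploiting the freedom afforded by Kauffman's detour move (Lemma \ref{MainLemma}) to reroute arcs without changing the equivalence class. Concretely, first I would fix the origin as basepoint and isotope the given oriented virtual doodle diagram $K$ so that it lies in $\mathbb{R}^2 \setminus \mathbb{D}^{\circ}$, so that a small angular perturbation makes $\pi$ injective on the crossing set $V(K)$ (giving all crossings distinct angular coordinates, as required by condition (3)), and so that, after subdividing the underlying immersion $k$ at the crossings, no edge contains a crossing in its interior. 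Relative to $\pi$, call an edge \emph{coherent} if its angular coordinate is strictly monotone in the counterclockwise direction and \emph{incoherent} otherwise. The requirement that $\pi \circ k$ be an orientation-compatible $n$-fold covering is precisely the condition that every edge be coherent, so the goal is to eliminate all incoherent edges.

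The heart of the argument is the braiding step. Given an incoherent edge $e$ (an arc whose angular coordinate decreases along the orientation, with no crossing in its interior), I would apply Alexander's trick: delete $e$ and reconnect its two endpoints by an arc routed the \emph{other} way around the disk, through the complementary angular interval, so that its angular coordinate becomes monotone counterclockwise, while keeping the whole arc inside $\mathbb{R}^2 \setminus \mathbb{D}^{\circ}$. This sweep crosses the strands lying in the swept region, and in the flat virtual setting I would declare each newly created crossing to be \emph{virtual}. Consequently the set $V_R(K)$ of real crossings — and indeed the entire Gauss data $\big(V_R(K), X(K)\big)$ — is unchanged, and the rerouting is realised by a detour move together with planar isotopy. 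By Lemma \ref{MainLemma} (equivalence of (1) and (3)), the modified diagram is equivalent to $K$. Iterating over the incoherent edges, and interspersing the angular perturbations needed to keep $\pi$ injective on $V(K)$, produces after finitely many steps a diagram in which every edge is coherent.

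Once every edge is coherent, each strand winds monotonically counterclockwise, so $\pi \circ k$ is an $n$-fold covering compatible with the orientation of $\mathbb{S}^1$, and each real crossing, having both of its through-strands coherently oriented, is locally a twin generator $s_i$. Hence the resulting diagram satisfies all four defining conditions of a closed virtual twin diagram of some degree $n$. Choosing a point $p \in \mathbb{S}^1$ with $\pi^{-1}(p) \cap V(K) = \emptyset$ and cutting along the ray through $p$ yields a virtual twin diagram $\beta$ on $n$ strands whose closure is exactly this diagram; therefore $K$ is equivalent to the closure of $\beta$, as required.

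The main obstacle I anticipate is the bookkeeping in the braiding step: one must verify that each incoherent edge can be pushed around the disk so that all newly created crossings are genuinely virtual (never forcing a new real crossing), that the local picture at each four-valent real crossing is respected throughout and ends in coherent $s_i$-form, that already-corrected edges are not spoiled, and that the process terminates — for instance by exhibiting a monotone complexity measure counting incoherent edges or total angular turning. In the classical doodle case the analogous rerouting fails precisely because the forbidden move $s_i s_{i+1} s_i \neq s_{i+1} s_i s_{i+1}$ obstructs it; the decisive new ingredient here is that virtual crossings, being free to introduce and manipulate via the detour move of Lemma \ref{MainLemma}, remove this obstruction, which is exactly why well-definedness of planar closure and the Alexander Theorem become available in the virtual setting.
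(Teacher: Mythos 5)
There is a genuine gap, and it sits exactly at the point you flagged but did not resolve: the real crossings themselves. Your braiding step reroutes only edges, keeping every real crossing fixed in the plane. This cannot repair a real crossing through which a strand runs angularly clockwise. The germs of the two strands at a real crossing belong to the fixed local crossing structure, not to the edges: an edge that is to become angularly monotone counterclockwise must approach the crossing with a reversed germ, and reversing the germs of one strand while the crossing point and the other strand stay put changes the cyclic arrangement of the four local ends around the crossing --- that is, it changes the oriented local type recorded by the labelling $c_i^1,\dots,c_i^4$ of Figure \ref{LabellingsOnRealCrossing}, hence changes the Gauss data. So at such a crossing your move is either unavailable (any arc ending with the prescribed clockwise germ is still incoherent) or is not an equivalence of virtual doodles at all (it amounts to switching the type of a crossing, the flat analogue of a crossing change). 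Lemma \ref{MainLemma} cannot be invoked to excuse this: the detour move reroutes arcs lying in $W(K)$, the complement of the embedded crossing neighbourhoods $N_i$, and never changes how the $N_i$ sit in the plane, so no sequence of detour moves applied to edges alone will ever turn an incoherent crossing into a coherent one.

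The missing idea --- and the actual content of the paper's proof --- is that the crossings must be moved, not just the edges. The paper cuts out the disks $N_1,\dots,N_n$, re-places each one in $\mathbb{R}^2 \setminus \mathbb{D}$ rotated so that both strands through it are compatible with the orientation of $\mathbb{S}^1$ (this is possible for both oriented crossing types) and so that the $\pi(N_i)$ are pairwise disjoint, and then reconnects the marked boundary points according to the Gauss data of $K$, declaring every new intersection virtual. The resulting diagram is a closed virtual twin diagram by construction and has the same Gauss data as $K$, so Lemma \ref{MainLemma}, (1)$\Rightarrow$(2), gives the equivalence in one stroke; cutting along a generic ray finishes the proof. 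Your iterative scheme can be repaired along these lines: first make every real crossing coherent by a planar isotopy supported in a small annulus around it (rotating $N_i$ and dragging the attached arcs along, which creates no new crossings), and only then apply the detour-move rerouting to the edges. With that preliminary step, and with your acknowledged termination bookkeeping carried out, your argument would go through and would rest on the same key lemma as the paper's; as written, however, the step you deferred (``the local picture at each four-valent real crossing is respected \dots and ends in coherent $s_i$-form'') is precisely the crux, and edge rerouting alone cannot deliver it.
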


\begin{proof}
Let $K$ be a virtual doodle diagram with $n$ real crossings. The idea is to construct a closed virtual twin diagram with the same Gauss data as that of $K$. The proof then follows from Lemma \ref{MainLemma}. We label each real crossing of $K$ as in Figure \ref{LabellingsOnRealCrossing}. Next, we consider $\mathbb{R}^2 \setminus \mathbb{D}^{\circ}$ and orient the boundary $\mathbb{S}^1$ of $\mathbb{D}$, say, counter clockwise. Considering the real crossings of $K$ with the information assigned as in Figure \ref{LabellingsOnRealCrossing}, we  place them in $\mathbb{R}^2 \setminus \mathbb{D}$ such that $\pi(c_i) \cap \pi(c_j) = \phi$ for all $i \neq j$ and the orientation is compatible with the orientation of $\mathbb{S}^1$.  Next, we join these crossings in $\mathbb{R}^2\setminus \mathbb{D}$ according to the Gauss data such that each intersection of arcs is marked as a virtual crossing and the orientation of arcs/loops are compatible with the orientation of $\mathbb{S}^1$, as illustrated in Figure \ref{AlexanderTheoremIllustration}. In other words, for each $(a,b) \in X(K)$ the orientation of the arc joining $a$ to $b$ should be compatible with the orientation of $\mathbb{S}^1$, that is, there is a possibility that we will have to wind the arc around $\mathbb{S}^1$ to join $a$ and $b$. Also, whenever it intersects with some other arc, then the intersection point should be marked as a virtual crossing. Note that this process is well defined upto detour moves shown in Figure \ref{DetourMove}, and virtual doodle so obtained is a closed virtual twin diagram which has the same Gauss data as that of $K$. Finally, cutting along $\pi^{-1} (p)$ for a point $ p \in \mathbb{S}^1$ such that $\pi^{-1} (p)$ does not pass through any crossing gives the desired virtual twin diagram whose closure is $K$. 
\end{proof}

\begin{figure}[hbtp]
\centering
\includegraphics[scale=.08]{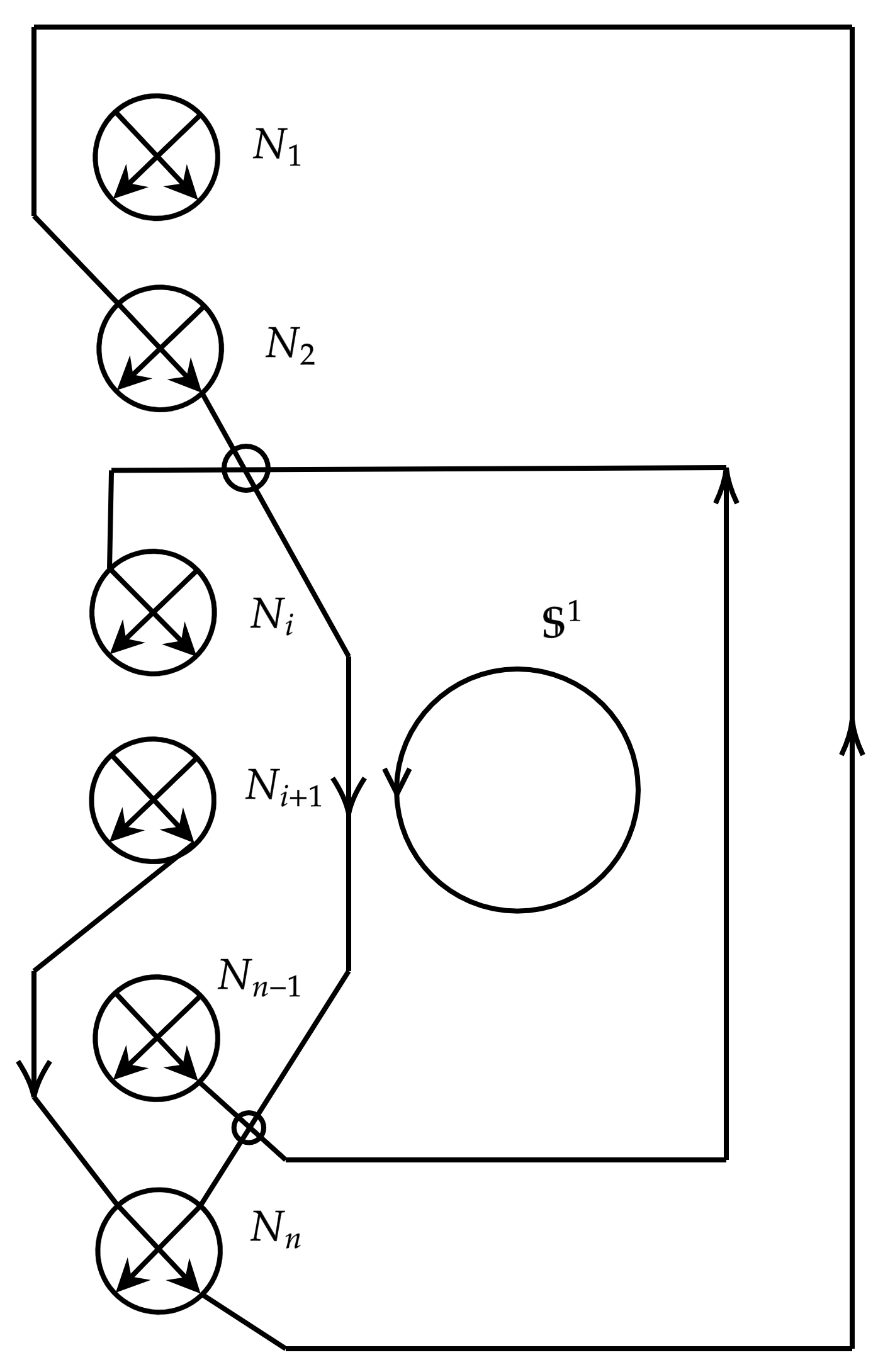}
\caption{}
\label{AlexanderTheoremIllustration}
\end{figure}

Following \cite{Kamada}, for convenience in writing, we refer the process of construction of a virtual twin in Theorem \ref{Alexender theorem} as the {\it braiding process} which is illustrated for virtual Kishino doodle in Figure \ref{AlexanderTheoremExampleKishino}.

\begin{figure}[hbtp]
\centering
\includegraphics[scale=.1]{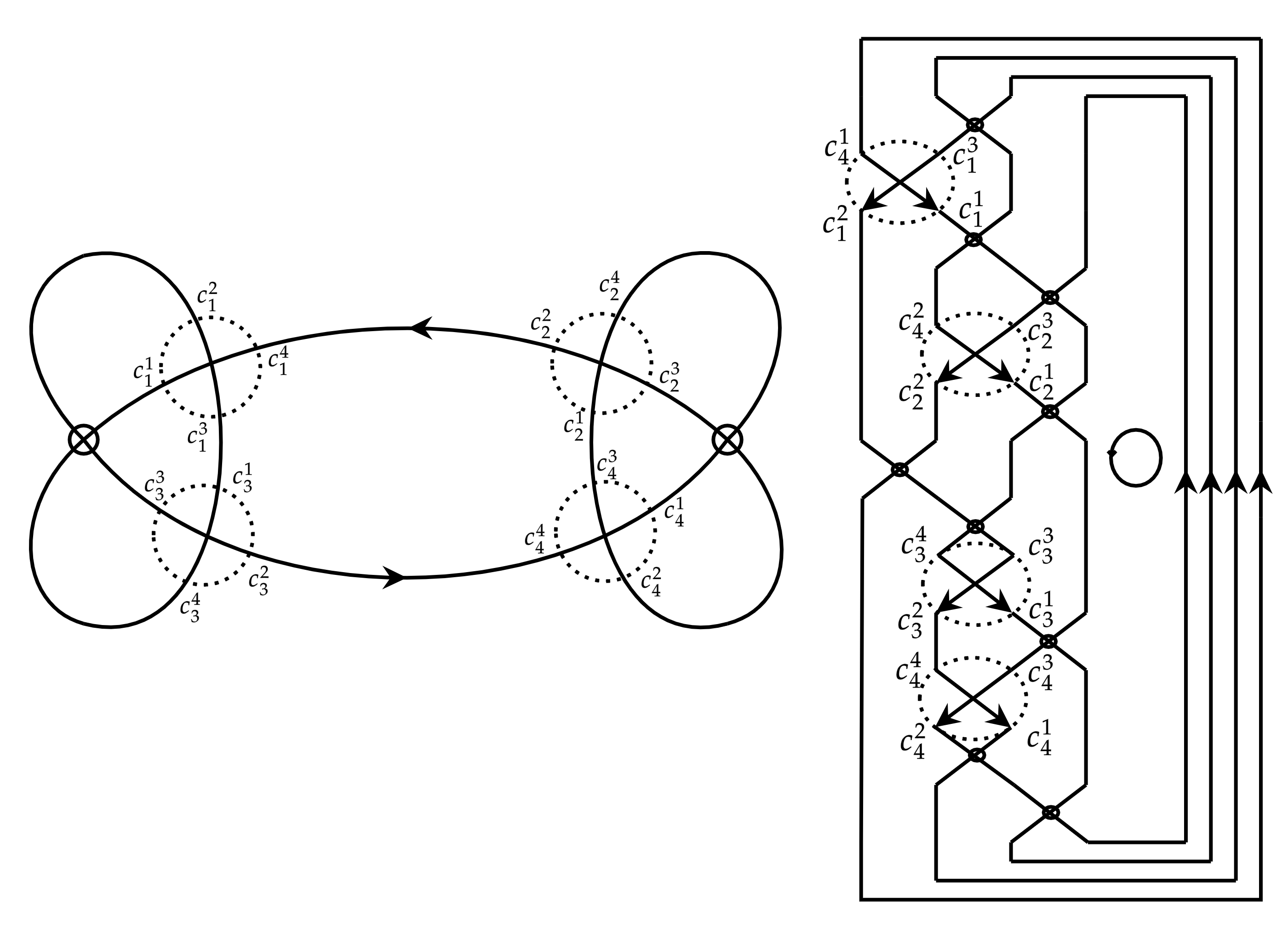}
\caption{Application of braiding process on virtual Kishino doodle}
\label{AlexanderTheoremExampleKishino}
\end{figure}

\section{Markov Theorem for virtual doodles}\label{Markov-theorem}
For $\beta \in VT_n$, let $m \otimes \beta  \in  VT_{n+m}$ denote the virtual twin obtained by putting trivial $m$ strands on the left of $\beta$. For $n \geq 2$ and virtual twins $\alpha, \beta, \beta_1, \beta_2 \in VT_n$, consider the following moves as illustrated in Figures \ref{StabilisationMoves} and \ref{ExchangeMoves}:
\begin{enumerate}
\item[$(M0)$] Defining relations \ref{virtual-twin-relations} in $VT_n$ (cf. Figure \ref{ReidemeisterMoves}).
\item[$(M1)$] Conjugation: $\alpha^{-1} \beta \alpha \sim \beta$.
\item[$(M2)$] Right stabilization of real or virtual type: $\beta s_n \sim \beta$ or $\beta \rho_n \sim \beta$.
\item[$(M3)$] Left stabilization of real type: $(1 \otimes \beta) s_1 \sim \beta$.
\item[$(M4)$] Right  exchange: $\beta_1 s_n \beta_2 s_n \sim \beta_1 \rho_n \beta_2 \rho_n$.
\item[$(M5)$] Left  exchange: $s_1 (1 \otimes \beta_1) s_1 (1 \otimes\beta_2) \sim \rho_1 (1 \otimes \beta_1) \rho_1 (1 \otimes \beta_2)$.
\end{enumerate}

We observe that the left stabilization of virtual type  $(1 \otimes \beta) \rho_1 \sim \beta$ is a consequence of the other moves as shown in Figure \ref{LeftStabilizationAsConsequence}. Note that the moves $M0-M5$ can be defined for closed virtual twin diagrams in a similar manner.

\begin{figure}[hbtp]
\centering
\includegraphics[scale=.25]{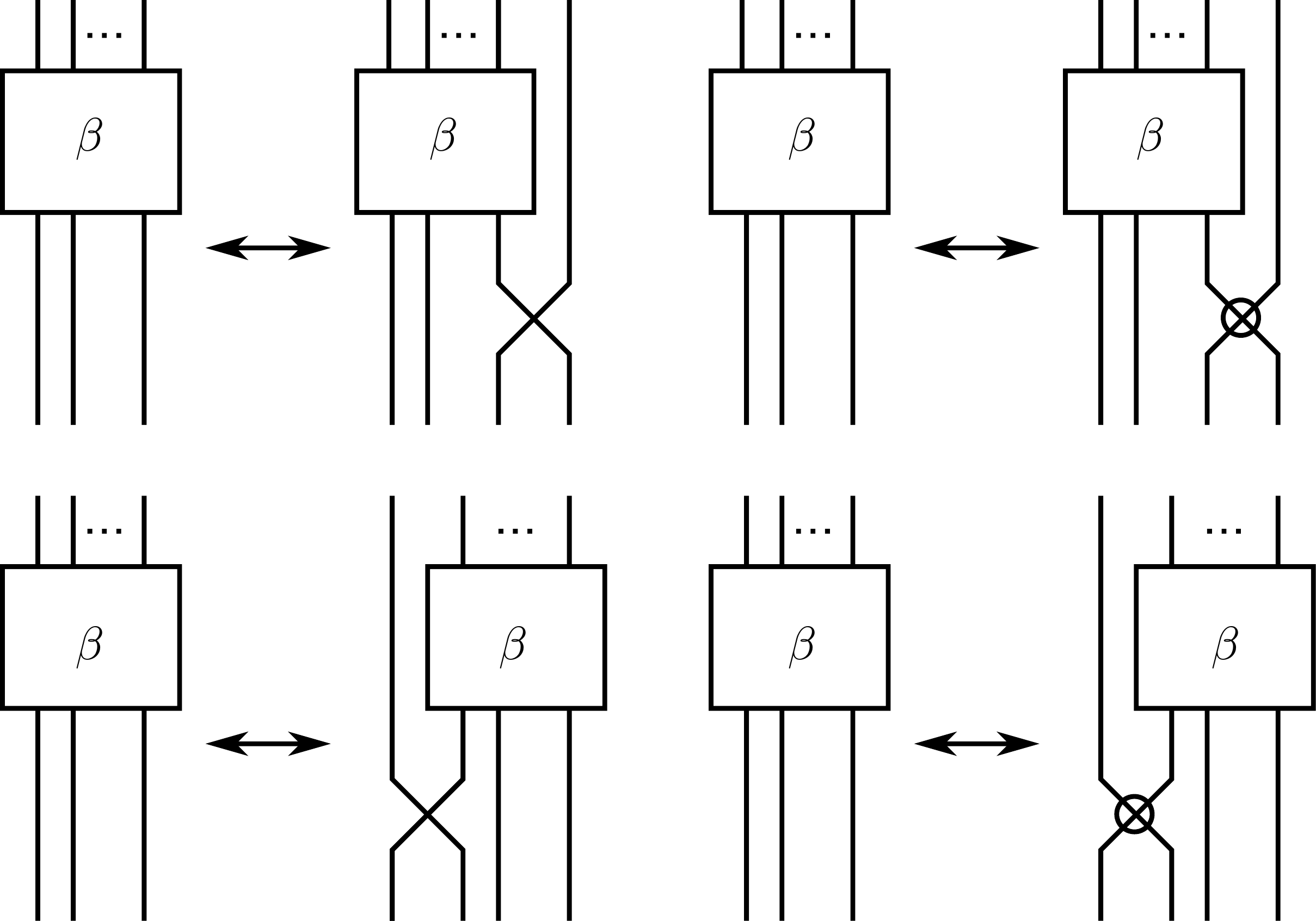}
\caption{Left and right stabilisation of real and virtual type}
\label{StabilisationMoves}
\end{figure}
\begin{figure}[hbtp]
\centering
\includegraphics[scale=0.25]{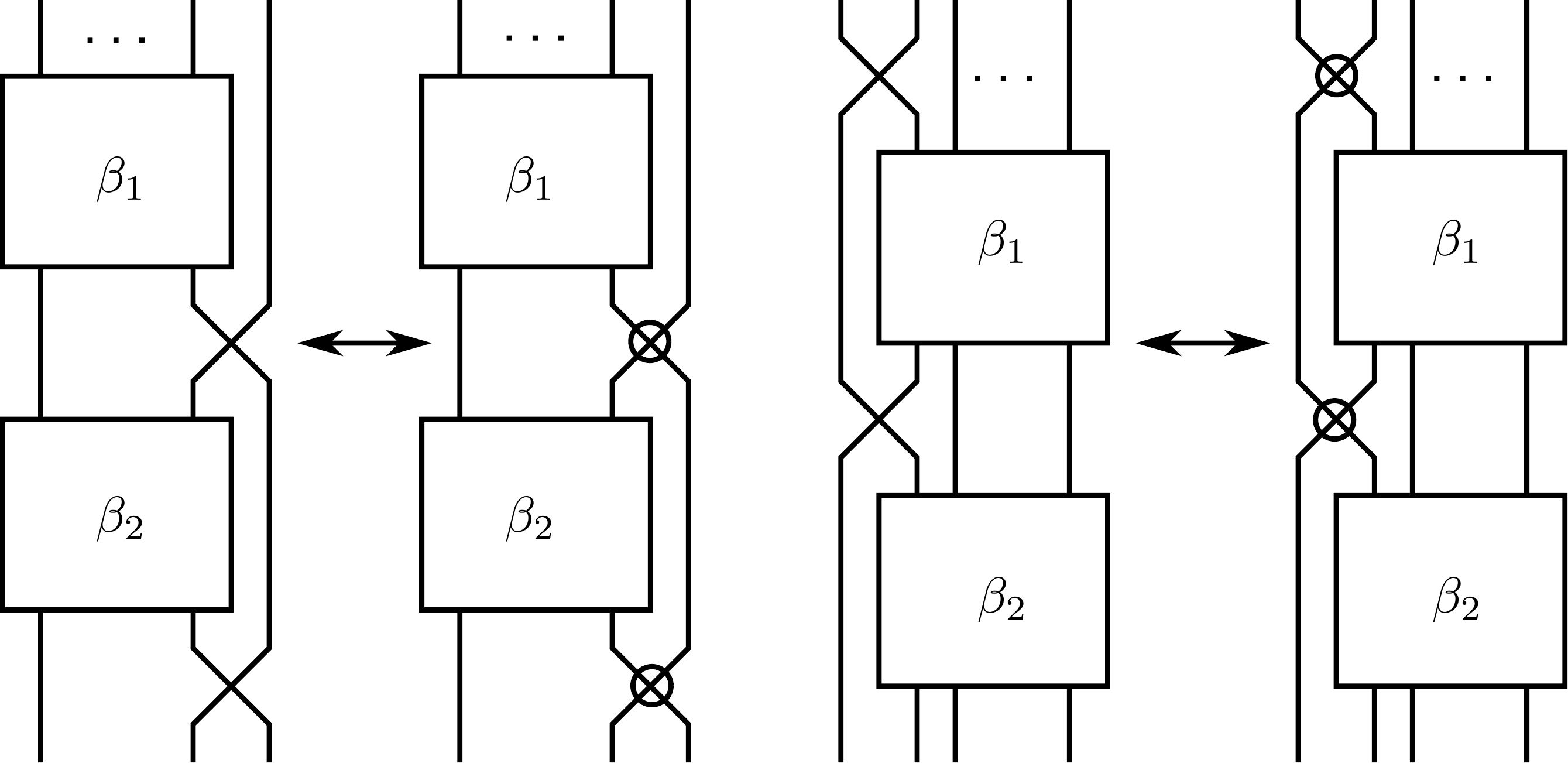}
\caption{Left and right exchange}
\label{ExchangeMoves}
\end{figure}

\begin{figure}[hbtp]
\centering
\includegraphics[scale=0.4]{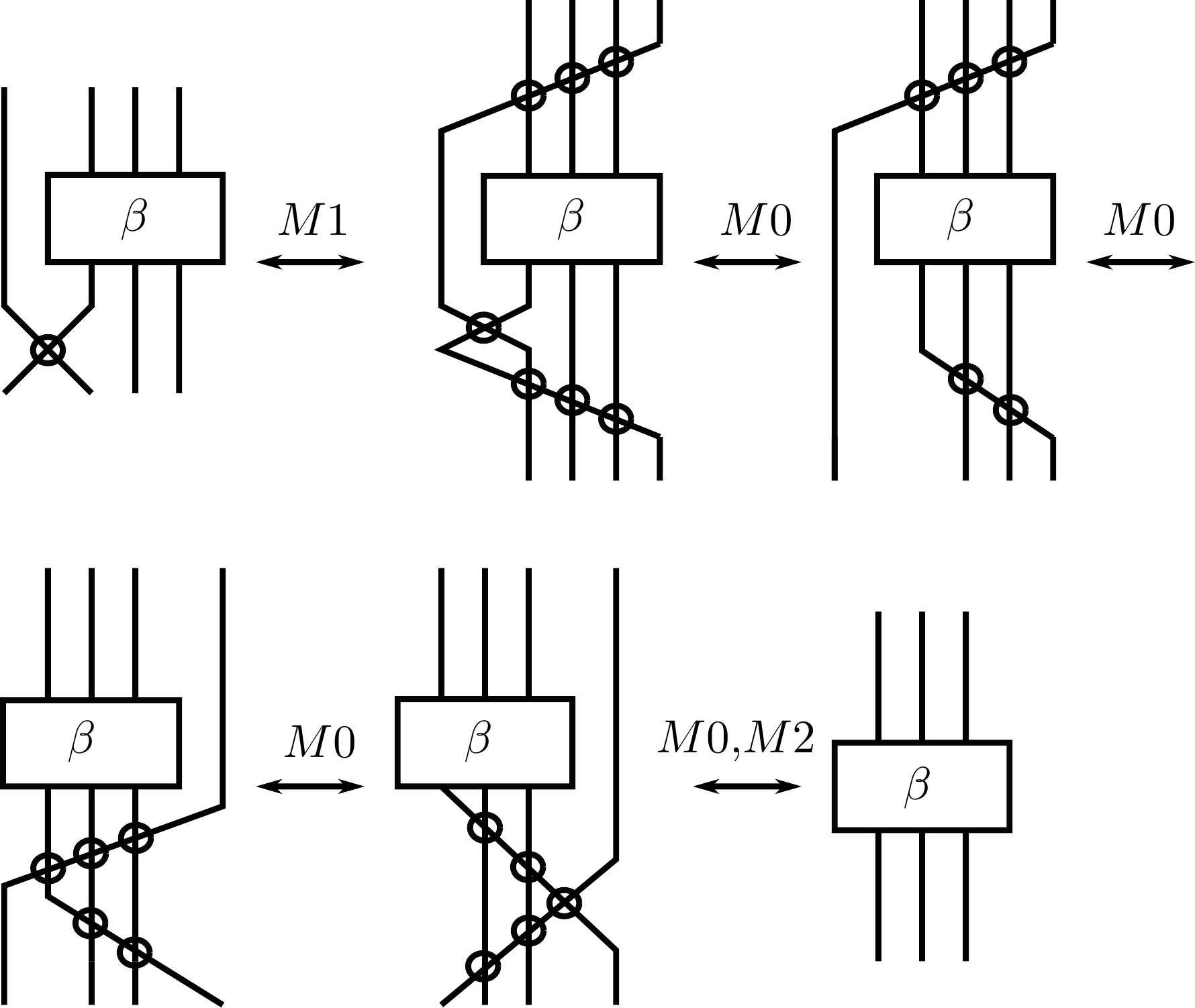}
\caption{Left stabilization of virtual type as a consequence of $M0-M5$}
\label{LeftStabilizationAsConsequence}
\end{figure}

\begin{lemma}\label{Lemma1}
Let $ n \geq 2$ and $1 \leq i \leq n$. Under the assumption of moves $M0-M5$, the following hold:
\begin{enumerate}
\item $\beta s_n s_{n-1} \dots s_{i+1}s_is_{i+1} \dots s_{n-1}s_{n} \sim \beta$, where $\beta \in VT_n$.
\item $s_n s_{n-1} \dots s_{i+1}s_i \beta_1s_i s_{i+1} \dots s_n \beta_2 \sim   \rho_n \rho_{n-1} \dots \rho_{i+1} \rho_i \beta_1 \rho_i \rho_{i+1} \dots \rho_n \beta_2$, where $\beta_1 \in VT_{i}$ and $\beta_2 \in VT_n$.
\item $\tau_n \tau_{n-1} \dots \tau_{i+1} \tau_i \beta_1 \tau_i \tau_{i+1} \dots \tau_{n-1} \tau_n \beta_2 \sim   \rho_n \rho_{n-1} \dots \rho_{i+1} \rho_i \beta_1 \rho_i \rho_{i+1} \dots \rho_n \beta_2$, where $\beta_1 \in VT_{i}$, $\beta_2 \in VT_n$ and  $\tau_j=s_j$ or $\rho_j$ for each $j$.
\item  $\beta \tau_n \tau_{n-1} \dots \tau_{i} \tau_{i-1} \tau_{i} \dots \tau_{n-1} \tau_{n} \sim \beta$, where $\beta \in VT_n$ and $\tau_j=s_j$ or $\rho_j$ for each $j$.
\end{enumerate} 
\end{lemma}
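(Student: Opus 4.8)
The plan is to treat the four identities as generalized Markov moves and to organize the proof around the two \emph{generalized exchange} statements (2) and (3), from which the two \emph{generalized stabilization} statements (1) and (4) are deduced. All four are proved by induction on $n-i$, i.e.\ on the length of the chain. The atomic inputs are conjugation $M1$ (equivalently, invariance of the closed twin under cyclic permutation), right stabilization $M2$, the exchange moves $M4$ and $M5$, and the defining relations $M0$. The single most useful relation is $\rho_i\rho_{i+1}s_i=s_{i+1}\rho_i\rho_{i+1}$, which rearranges to the \emph{climbing identity}
\[
\rho_{i+1}s_i\rho_{i+1}=\rho_i s_{i+1}\rho_i,
\]
together with far commutativity $\rho_i s_j=s_j\rho_i$ and $\rho_i\rho_j=\rho_j\rho_i$ for $|i-j|\ge 2$.

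For (2) (and its refinement (3)) I would induct on $n-i$. The base case $i=n$ reads $s_n\beta_1 s_n\beta_2\sim\rho_n\beta_1\rho_n\beta_2$ with $\beta_1,\beta_2\in VT_n$; this follows by cyclically moving $\beta_2$ to the front, applying $M4$ with first factor $\beta_2$ and second factor $\beta_1$, and cycling back. For the inductive step I would absorb the innermost generator into the free factor by setting $\beta_1'=s_i\beta_1 s_i\in VT_{i+1}$, rewrite the left-hand side as $s_n\cdots s_{i+1}\,\beta_1'\,s_{i+1}\cdots s_n\,\beta_2$, and apply the inductive hypothesis at index $i+1$. This leaves me to convert $s_i\beta_1 s_i$, flanked by the two innermost $\rho_{i+1}$, into $\rho_i\beta_1\rho_i$ --- that is, to perform an \emph{interior exchange at level $i$}. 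Statement (3) is obtained by running the same induction while allowing each level to carry $\tau_j\in\{s_j,\rho_j\}$ and converting the $s$-levels to $\rho$-levels one at a time; taking every $\tau_j=s_j$ recovers (2).

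The main obstacle is exactly this interior exchange: it is not literally $M4$ (which lives at the top level $n$) nor $M5$ (at the bottom level $1$), and it sits nested inside the higher-index generators $\rho_{i+1},\dots,\rho_n$, so no atomic move applies verbatim. I would resolve it by conjugating (via $M1$) by the ambient $\rho$-chain $\rho_{i+1}\cdots\rho_n$ so as to bring the active level to the boundary and reduce to $M5$ (or, after cycling, to $M4$), while transporting the remaining factor $\beta_2$ through the chain using the climbing identity and far commutativity and re-absorbing it by conjugation. Controlling this transport is delicate precisely because $\rho_n$ does not commute with the top generators $s_{n-1},\rho_{n-1}$ of the remaining chain; the climbing identity is what lets a stray $\rho$ be traded for a lower-index configuration and then freed to the ends. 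I expect this bookkeeping to be the technical heart of the argument.

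Finally, (1) and (4) follow by specialization. Identity (1) is (2) with bottom index $i+1$ and $\beta_1=s_i$, and (4) is (3) with $\beta_1=\tau_{i-1}$; in each case the right-hand side becomes a palindromic $\rho$-fold $\rho_n\cdots\rho_{k+1}\,\sigma_k\,\rho_{k+1}\cdots\rho_n$ carrying a single bottom generator $\sigma_k\in\{s_k,\rho_k\}$. I would remove this by a short sublemma. If $\sigma_k=\rho_k$, the fold collapses by repeated use of the braid relations among the $\rho$'s followed by virtual right stabilization $\beta\rho_n\sim\beta$ (the $M2$ of virtual type). If $\sigma_k=s_k$, I would climb the lone $s_k$ upward, $s_k\mapsto s_{k+1}\mapsto\cdots\mapsto s_{n-1}$, using the climbing identity and far commutativity to free one $\rho$ at each step (absorbed by $M1$), arriving at $\beta^\ast\,\rho_n s_{n-1}\rho_n=\beta^\ast\,\rho_{n-1}s_n\rho_{n-1}$, which reduces to $\beta^\ast$ by conjugation and real right stabilization $M2$. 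Unwinding the conjugations returns $\beta$, completing (1) and (4).
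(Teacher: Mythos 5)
Your architecture is a genuine reorganization of the paper's proof, built from the same raw ingredients. The paper runs two separate \emph{outside-in} inductions: for both (1) and (2) it applies $M4$ at the top level \emph{first}, then uses the relation $\rho_n s_{n-1}\rho_n=\rho_{n-1}s_n\rho_{n-1}$ (your climbing identity) plus far commutativity to push the freed $\rho$'s inward, and only then invokes the inductive hypothesis on the inner, shorter chain; (3) is then obtained by repeated application of (2), and (4) by substituting $\beta_1=\tau_{i-1}$ into (3) and collapsing the resulting $\rho$-fold, using (2) and (1) in the case $\tau=s$. You instead work \emph{inside-out}: absorb $s_i\beta_1 s_i\in VT_{i+1}$, apply the inductive hypothesis to the outer chain immediately, and isolate the residue as an interior exchange at level $i$; statements (1) and (4) then become corollaries of (2)/(3) plus your fold-removal sublemma. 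Your organization is more economical (one induction plus one sublemma, rather than two parallel inductions), and your fold-removal sublemma is correct as sketched --- climbing the lone $s_k$ upward, shedding one pair of $\rho$'s per step into the conjugating factor, and finishing with $M1$ and $M2$ is exactly how the paper disposes of (4). What the paper's ordering buys is that the exchange move is only ever applied at the boundary level, so it never has to formulate or prove any nested exchange statement.

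The step you must repair is the interior exchange itself, which you correctly identify as the technical heart but whose proposed resolution does not work as stated. Conjugating by the ambient chain $\rho_{i+1}\cdots\rho_n$ does \emph{not} bring the active level to the boundary: by $M1$ it merely cycles the word, leaving $s_i\beta_1 s_i\,\gamma$ with $\gamma=\rho_{i+1}\cdots\rho_n\beta_2\rho_n\cdots\rho_{i+1}$, where $s_i$ still sits at level $i$; and $M5$ can never apply here, because the bottom strands are occupied by $\beta_1$ rather than free. The correct conjugator is the product of climbing \emph{pairs}, not the ambient chain: since $(\rho_k\rho_{k+1})s_k(\rho_k\rho_{k+1})^{-1}=s_{k+1}$, $(\rho_k\rho_{k+1})\rho_k(\rho_k\rho_{k+1})^{-1}=\rho_{k+1}$, and $(\rho_k\rho_{k+1})\beta_1(\rho_k\rho_{k+1})^{-1}=\rho_k\beta_1\rho_k\in VT_{k+1}$ whenever $\beta_1\in VT_k$, conjugation by $w=(\rho_{n-1}\rho_n)(\rho_{n-2}\rho_{n-1})\cdots(\rho_i\rho_{i+1})$ transports the level-$i$ exchange statement $s_i\beta_1 s_i\,\gamma\sim\rho_i\beta_1\rho_i\,\gamma$ to the level-$n$ statement with $w\beta_1w^{-1}\in VT_n$, and that statement is $M4$ after one $M1$ cycle; undoing the conjugation by $M1$ finishes the sublemma. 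Equivalently, run a second induction on $n-k$ in which one application of $\rho_{k+1}s_k\rho_{k+1}=\rho_k s_{k+1}\rho_k$ raises the level by one, the shed $\rho_k$'s commute past $\rho_{k+2},\dots,\rho_n$ and are absorbed by $M1$ at the cost of replacing the trailing factor by its $\rho_k$-conjugate, with base case $M1$ plus $M4$. With that substitution your proof closes, and it uses exactly the moves the paper's own proof of this lemma uses, namely $M0$, $M1$, $M2$ and $M4$; $M5$ is never needed.
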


\begin{proof}
We begin by observing that the case $i=n$ holds due to move $M2$. Also, for $i=n-1$, we have 
\begin{eqnarray*}
\beta \underline{s_n} s_{n-1} \underline{s_n} &\stackrel{M4}{\sim}& \beta \underline{\rho_n s_{n-1} \rho_n}\\
&\stackrel{M0}{\sim}& \beta \rho_{n-1} s_{n} \underline{\rho_{n-1}}\\
&\stackrel{M1}{\sim}&  \rho_{n-1} \beta \rho_{n-1} \underline{s_{n}}\\
&\stackrel{M2}{\sim}&  \underline{\rho_{n-1}} \beta \underline{\rho_{n-1}}\\
&\stackrel{M1}{\sim}&  \beta.
\end{eqnarray*}

Let us suppose that 
\begin{equation}\label{eqn1}
\beta s_n s_{n-1} \dots s_{i+2}s_{i+1}s_{i+2} \dots s_{n-1}s_{n} \sim \beta
\end{equation}
for $1 \leq i \leq n-2$ and for any $\beta \in VT_n$.
Then, we have 
\begin{eqnarray*}
& & \beta \underline{s_n} s_{n-1} \dots s_{i+1}s_is_{i+1} \dots s_{n-1}\underline{s_{n}}\\
 &\stackrel{M4}{\sim}& \beta \rho_n s_{n-1} \underline{s_{n-2} \dots s_{i+1}s_is_{i+1} \dots s_{n-2}} s_{n-1}\rho_{n}\\
 &\stackrel{M0}{\sim}& \beta \underline{\rho_n s_{n-1} \rho_n} \dots s_{i+1}s_is_{i+1} \dots \underline{\rho_n s_{n-1}\rho_{n}}\\
 &\stackrel{M0}{\sim}&\beta \rho_{n-1} s_{n} \rho_{n-1}\underline{s_{n-2} \dots s_{i+1}s_is_{i+1} \dots s_{n-2}}\rho_{n-1} s_{n}\rho_{n-1}\\
 &\stackrel{M0}{\sim}&\beta \rho_{n-1} s_{n} \underline{\rho_{n-1} s_{n-2} \rho_{n-1}} \dots s_{i+1}s_i s_{i+1} \dots \underline{\rho_{n-1} s_{n-2}\rho_{n-1}} s_{n} \rho_{n-1}\\
 &\stackrel{M0}{\sim}& \beta \rho_{n-1} \underline{s_{n} \rho_{n-2}} s_{n-1} \rho_{n-2} \dots s_{i+1}s_i s_{i+1} \dots \rho_{n-2} s_{n-1}\underline{\rho_{n-2} s_{n}} \rho_{n-1}\\
 &\stackrel{M0}{\sim}& \beta \rho_{n-1}  \rho_{n-2} s_{n} s_{n-1} \rho_{n-2} \dots s_{i+1}s_i s_{i+1} \dots \rho_{n-2} s_{n-1} s_{n} \rho_{n-2} \rho_{n-1}.
\end{eqnarray*}

Repeating the above steps give
\begin{eqnarray*}
& & \beta s_n s_{n-1} \dots s_{i+1}s_is_{i+1} \dots s_{n-1}s_{n}\\
&\sim& \beta \rho_{n-1}  \rho_{n-2} \dots \rho_{i+1} s_{n} s_{n-1} \dots s_{i+2} \underline{\rho_{i+1} s_i \rho_{i+1}} s_{i+2} \dots s_{n-1} s_{n} \rho_{i+1} \dots\rho_{n-2} \rho_{n-1}\\
&\stackrel{M0}{\sim}& \beta \rho_{n-1}  \rho_{n-2} \dots \rho_{i+1} s_{n} s_{n-1} \dots s_{i+2}\underline{\rho_{i}} s_{i+1} \underline{\rho_{i}} s_{i+2} \dots s_{n-1} s_{n} \rho_{i+1} \dots\rho_{n-2} \rho_{n-1}\\
&\stackrel{M0}{\sim}& \beta \rho_{n-1} \rho_{n-2} \dots \rho_i s_n s_{n-1} \dots s_{i+2} s_{i+1} s_{i+2} \dots s_{n-1}s_n \underline{\rho_i \dots \rho_{n-2} \rho_{n-1}}\\
&\stackrel{M1}{\sim}& \rho_i \dots\rho_{n-2} \rho_{n-1}\beta \rho_{n-1} \rho_{n-2} \dots \rho_i \underline{s_n s_{n-1} \dots s_{i+2} s_{i+1} s_{i+2} \dots s_{n-1} s_n}.
\end{eqnarray*}

Since $\rho_i \dots\rho_{n-2} \rho_{n-1}\beta \rho_{n-1} \rho_{n-2} \dots \rho_i \in VT_n$, by (\ref{eqn1}) and move $M1$, we get
$$ \beta s_n s_{n-1} \dots s_{i+1}s_is_{i+1} \dots s_{n-1}s_{n} \stackrel{(6.0.1)}{\sim} \underline{\rho_i \dots\rho_{n-2} \rho_{n-1}}\beta \underline{\rho_{n-1} \rho_{n-2} \dots \rho_i} \stackrel{M1}{\sim} \beta.$$
This proves assertion (1).

For assertion (2), note that the case $i=n$ follows from  moves $M1$ and $M4$. Let us suppose that for any $\beta_1 \in VT_{i+1}$ and $\beta_2 \in VT_n$, we have
\begin{equation}\label{induction2}
s_n s_{n-1} \dots s_{i+2}s_{i+1} \beta_1s_{i+1} s_{i+2} \dots s_n \beta_2 \sim   \rho_n \rho_{n-1} \dots \rho_{i+2} \rho_{i+1} \beta_1 \rho_{i+1} \rho_{i+2} \dots \rho_n \beta_2.
\end{equation}
We claim that 
$$s_n s_{n-1} \dots s_{i+1}s_i \beta_1s_i s_{i+1} \dots s_{n-1}s_n \beta_2 \sim   \rho_n \rho_{n-1} \dots \rho_{i+1} \rho_i \beta_1 \rho_i \rho_{i+1} \dots \rho_{n-1} \rho_n \beta_2$$ for  $\beta_1 \in VT_{i}$ and $\beta_2 \in VT_n$.
For $1 \leq i \leq n-1$, we have
\begin{eqnarray*}
&&s_n s_{n-1} \dots s_{i+1}s_i \beta_1s_i s_{i+1} \dots s_{n-1}s_n \underline{\beta_2}\\
 &\stackrel{M1}{\sim} & \beta_2 \underline{s_n} s_{n-1} \dots s_{i+1}s_i \beta_1s_i s_{i+1} \dots s_{n-1}\underline{s_n}\\
 &\stackrel{M4}{\sim} & \underline{\beta_2} \rho_n s_{n-1} \dots s_{i+1}s_i \beta_1s_i s_{i+1} \dots s_{n-1} \rho_n\\
 &\stackrel{M1}{\sim} &\rho_n s_{n-1}\underline{s_{n-2} \dots s_{i+1} s_i \beta_1 s_i s_{i+1} \dots s_{n-2}}s_{n-1} \rho_n \beta_2\\
 &\stackrel{M0}{\sim} & \underline{\rho_n s_{n-1}\rho_n} \dots s_{i+1} s_i \beta_1 s_i s_{i+1} \dots \underline{\rho_n s_{n-1} \rho_n} \beta_2\\
 &\stackrel{M0}{\sim} & \rho_{n-1} s_{n}\rho_{n-1} \dots s_{i+1} s_i \beta_1 s_i s_{i+1} \dots \rho_{n-1}s_{n} \rho_{n-1} \beta_2\\
 &\stackrel{M0}{\sim} & \rho_{n-1} s_{n}\underline{\rho_{n-1}s_{n-2}\rho_{n-1}} \dots s_i \beta_1 s_i \dots \underline{\rho_{n-1}s_{n-2}\rho_{n-1}}s_{n} \rho_{n-1} \beta_2\\
 &\stackrel{M0}{\sim} & \rho_{n-1} \underline{s_{n}\rho_{n-2}}s_{n-1}\rho_{n-2} \dots s_i \beta_1 s_i \dots \rho_{n-2}s_{n-1}\underline{\rho_{n-2}s_{n}} \rho_{n-1} \beta_2\\
 &\stackrel{M0}{\sim} & \rho_{n-1} \rho_{n-2}s_{n}s_{n-1}\rho_{n-2} \dots s_i \beta_1 s_i \dots \rho_{n-2}s_{n-1}s_{n}\rho_{n-2} \rho_{n-1} \beta_2.
 \end{eqnarray*}

Repeating the preceding process yields
\begin{eqnarray*}
& & s_n s_{n-1} \dots s_{i+1}s_i \beta_1s_i s_{i+1} \dots s_{n-1}s_n \beta_2\\
&\sim &  \rho_{n-1}  \rho_{n-2} \dots \rho_{i} \underline{s_{n} s_{n-1} \dots s_{i+1} \rho_{i} \beta_1 \rho_i s_{i+1} \dots s_{n-1} s_{n}} \rho_{i} \dots\rho_{n-2} \rho_{n-1}\beta_2.
 \end{eqnarray*}

Notice that $\rho_i\beta_1\rho_i \in VT_{i+1}$ and $\rho_i \dots \rho_{n-2}\rho_{n-1} \beta_2 \rho_{n-1}\rho_{n-2} \dots \rho_i \in VT_n$. By (\ref{induction2}) and $M1$, we get 
\begin{eqnarray*}
&& s_n s_{n-1} \dots s_{i+1}s_i \beta_1s_i s_{i+1} \dots s_{n-1}s_n \beta_2\\
&\sim&  \underline{\rho_{n-1}  \rho_{n-2} \dots \rho_{i}}s_{n} s_{n-1} \dots s_{i+1} \rho_{i} \beta_1 \rho_i s_{i+1} \dots s_{n-1} s_{n} \rho_{i} \dots\rho_{n-2} \rho_{n-1}\beta_2\\
&\stackrel{M1}{\sim} & (s_{n} s_{n-1} \dots s_{i+1})( \rho_{i} \beta_1 \rho_i )(s_{i+1} \dots s_{n-1} s_{n})(\rho_{i} \dots\rho_{n-2} \rho_{n-1}\beta_2 \rho_{n-1}  \rho_{n-2} \dots \rho_{i})\\
&\stackrel{(6.0.2)}{\sim} & (\rho_{n} \rho_{n-1} \dots \rho_{i+1})( \rho_{i} \beta_1 \rho_i )(\rho_{i+1} \dots \rho_{n-1} \rho_{n})(\rho_{i} \dots\rho_{n-2} \rho_{n-1}\beta_2 \underline{\rho_{n-1}  \rho_{n-2} \dots \rho_{i}})\\
&\stackrel{M1}{\sim} &  \rho_{n-1}  \rho_{n-2} \dots \underline{\rho_{i}} \rho_{n} \rho_{n-1} \dots \rho_{i+1} \rho_{i} \beta_1 \rho_i \rho_{i+1} \dots \rho_{n-1} \rho_{n} \underline{\rho_{i}} \dots\rho_{n-2} \rho_{n-1}\beta_2\\
&\stackrel{M0}{\sim} &  \rho_{n-1}\rho_{n-2}  \dots\rho_{i+1} \rho_{n} \rho_{n-1} \dots \underline{\rho_{i}\rho_{i+1} \rho_{i}} \beta_1 \underline{\rho_i \rho_{i+1} \rho_{i}} \dots \rho_{n-1} \rho_{n} \rho_{i+1} \dots  \rho_{n-2}\rho_{n-1}\beta_2\\
&\stackrel{M0}{\sim} &  \rho_{n-1} \rho_{n-2} \dots\rho_{i+1} \rho_{n} \rho_{n-1} \dots \rho_{i+1}\rho_{i} \underline{\rho_{i+1} \beta_1 \rho_{i+1}} \rho_{i} \rho_{i+1} \dots \rho_{n-1} \rho_{n} \rho_{i+1} \dots   \rho_{n-1}\beta_2\\
&\sim& \rho_{n-1}  \dots\underline{\rho_{i+1}} \rho_{n} \rho_{n-1} \dots \rho_{i+1}\rho_{i} \beta_1 \rho_{i} \rho_{i+1} \dots \rho_{n-1} \rho_{n} \underline{\rho_{i+1}}\dots\rho_{n-1}\beta_2,\\
&&\textrm{($\rho_{i+1}$'s gets canceled as $\beta_1 \in VT_i$)}.
\end{eqnarray*}

Repeating the above steps finally gives
$$s_n s_{n-1} \dots s_{i+1}s_i \beta_1s_i s_{i+1} \dots s_{n-1}s_n \beta_2 \sim   \rho_n \rho_{n-1} \dots \rho_{i+1} \rho_i \beta_1 \rho_i \rho_{i+1} \dots \rho_{n-1} \rho_n \beta_2,$$ which proves assertion (2).

Repeatedly applying (2) on the expression $\tau_n \tau_{n-1} \dots \tau_{i+1} \tau_i \beta_1 \tau_i \tau_{i+1} \dots \tau_{n-1} \tau_n \beta_2$ yields assertion (3). For example,
\begin{eqnarray*}
&& \underline{s_n} \rho_{n-1} s_{n-2} \rho_{n-3} \beta_1 \rho_{n-3} s_{n-2} \rho_{n-1} \underline{s_n} \beta_2\\
 &\sim& \underline{\rho_n \rho_{n-1}} s_{n-2} \rho_{n-3} \beta_1 \rho_{n-3} s_{n-2} \underline{\rho_{n-1} \rho_n} \beta_2\\
 &\sim& \underline{s_n s_{n-1} s_{n-2}} \rho_{n-3} \beta_1 \rho_{n-3} \underline{s_{n-2} s_{n-1} s_n} \beta_2\\
 &\sim& \rho_n \rho_{n-1} \rho_{n-2} \rho_{n-3} \beta_1 \rho_{n-3} \rho_{n-2} \rho_{n-1} \rho_n \beta_2.
\end{eqnarray*}

For assertion (4), if we put $\beta_1=\tau_{i-1}$ and $\beta_2=\beta$ in assertion (3), then we get 
\begin{eqnarray*}
&& \underline{\tau_n \tau_{n-1} \dots \tau_{i+1} \tau_i \tau_{i-1} \tau_i \tau_{i+1} \dots \tau_{n-1} \tau_n} \beta\\
&\stackrel{M1}{\sim} & \beta \underline{\tau_n \tau_{n-1} \dots \tau_{i+1} \tau_i \tau_{i-1} \tau_i \tau_{i+1} \dots \tau_{n-1} \tau_n}\\ 
& \sim &  \beta \rho_n \rho_{n-1} \dots \rho_{i+1} \rho_i \tau_{i-1} \rho_i \rho_{i+1} \dots \rho_{n-1} \rho_n ~~\text{\big(by taking $\beta_1 = \tau_{i-1}$ and $\beta_2 = \beta$ in (3)\big)}.\\
\end{eqnarray*}

If $\tau=\rho$, then 
\begin{eqnarray*}
&&\beta \rho_n \rho_{n-1} \dots \rho_{i} \rho_{i-1} \rho_{i} \dots \rho_{n-1} \rho_{n}\\
&\sim &  \beta \underline{\rho_n \rho_{n-1} \dots \rho_{i-1}\rho_{i} \rho_{i+2} \rho_{i+1} \rho_{i+2} \rho_{i}\rho_{i-1} \dots \rho_{n-1} \rho_{n}}~~\textrm{(by repeated application of $M0$)}\\
&\sim & \beta \rho_{i-1} \rho_{i} \dots \rho_{n-1} \rho_{n} \underline{\rho_{n-1} \dots \rho_{i} \rho_{i-1}}~~\textrm{(by repeated application of the preceding step)}\\ 
&\stackrel{M1}{\sim} & \rho_{n-1} \dots \rho_{i} \rho_{i-1} \beta \rho_{i-1} \rho_{i} \dots \rho_{n-1} \underline{\rho_{n}}\\ 
&\stackrel{M2}{\sim} & \underline{\rho_{n-1} \dots \rho_{i} \rho_{i-1}} \beta \underline{\rho_{i-1} \rho_{i} \dots \rho_{n-1}}\\ 
&\stackrel{M1}{\sim} & \beta.
\end{eqnarray*}

Finally if  $\tau=s$, then we get
\begin{eqnarray*}
&& \beta \underline{\rho_n \rho_{n-1} \dots \rho_{i} s_{i-1} \rho_{i} \dots \rho_{n-1} \rho_{n}}\\
&\stackrel{(2)}{\sim} & \beta \underline{s_n s_{n-1} \dots s_{i} s_{i-1} s_{i} \dots s_{n-1} s_{n}}\\
&\stackrel{(1)}{\sim} & \beta,
\end{eqnarray*}
which completes the proof.
\end{proof}
 
Recall that for $\beta \in VT_n$, $m \otimes \beta  \in  VT_{n+m}$ denotes the virtual twin obtained by putting trivial $m$ strands on the left of $\beta$.
\begin{lemma}\label{Lemma2}
Let $ n \geq 2$ and $1 \leq i \leq n$. Under the assumption of moves $M0-M5$, the following hold:
\begin{enumerate}
\item $(1 \otimes \beta) s_1 s_{2} \dots s_{i-1}s_is_{i-1} \dots s_{2}s_{1} \sim \beta$, where $\beta \in VT_n$.
\item $s_1 s_{2} \dots s_{i-1}s_i (i \otimes \beta_1)s_i s_{i-1} \dots s_{2}s_1 (1 \otimes \beta_2) \sim \\ \rho_1 \rho_{2} \dots \rho_{i-1}\rho_i (i \otimes \beta_1)\rho_i \rho_{i-1} \dots \rho_{2}\rho_1 (1 \otimes \beta_2)$, where $\beta_1 \in VT_{n+1-i}$ and $\beta_2 \in VT_n$.
\item $\tau_1 \tau_{2} \dots \tau_{i-1} \tau_i  (i \otimes \beta_1) \tau_i \tau_{i-1} \dots \tau_{2} \tau_1   (1 \otimes \beta_2) \sim \\ \rho_1 \rho_{2} \dots \rho_{i-1} \rho_i  (i \otimes \beta_1) \rho_i \rho_{i-1} \dots \rho_{2} \rho_1   (1 \otimes \beta_2)$, where $\beta_1 \in VT_{n+1-i}$, $\beta_2 \in VT_n$ and $\tau_j=s_j$ or $\rho_j$ for each $j$.
\item  $ (1 \otimes \beta) \tau_1 \tau_{2} \dots \tau_{i-1} \tau_{i} \tau_{i-1} \dots \tau_{2} \tau_{1} \sim \beta$, where $\beta \in VT_n$ and $\tau_j=s_j$ or $\rho_j$ for each $j$.
\end{enumerate}
\end{lemma}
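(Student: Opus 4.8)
The plan is to deduce all four assertions from the corresponding assertions of Lemma \ref{Lemma1} by a single symmetry of the whole system of Markov moves, rather than repeating the inductions. The symmetry I would use is the \emph{flip} $\phi_n \colon VT_n \to VT_n$ defined on generators by $\phi_n(s_i) = s_{n-i}$ and $\phi_n(\rho_i) = \rho_{n-i}$ for $1 \le i \le n-1$. The first step is to check that $\phi_n$ is a well-defined automorphism: each defining relation in \ref{virtual-twin-relations} is symmetric under $i \mapsto n-i$, the only case needing care being the mixed relation $\rho_i \rho_{i+1} s_i = s_{i+1} \rho_i \rho_{i+1}$, whose image under $\phi_n$ is recovered by taking inverses of the same relation (all generators being involutions). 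Since $\phi_n$ permutes the generators bijectively and is visibly an involution, it is an automorphism.

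The second and central step is to show that the family $\{\phi_n\}_n$ is compatible with the moves $M0$--$M5$. For $\beta \in VT_n$ viewed in $VT_{n+1}$ through the right inclusion (indices unchanged) one computes $\phi_{n+1}(\beta) = 1 \otimes \phi_n(\beta)$, and more generally $\phi_{n+m}(m \otimes \beta)$ equals the right inclusion of $\phi_n(\beta)$; thus $\phi$ converts ``adjoining $m$ trivial strands on the left'' into ``adjoining them on the right''. Using $\phi_{n+1}(s_n) = s_1$ and $\phi_{n+1}(\rho_n) = \rho_1$ together with this observation, one checks directly that $\phi$ fixes $M0$ and $M1$ and interchanges $M2 \leftrightarrow M3$ and $M4 \leftrightarrow M5$. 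Since $\phi$ is invertible and carries each generating move to a generating move, it preserves the equivalence relation $\sim$ generated by $M0$--$M5$, in the sense that $x \sim y$ if and only if $\phi(x) \sim \phi(y)$.

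Granting this, the conclusion is immediate: applying the flip to both sides of assertion $(k)$ of Lemma \ref{Lemma1} produces assertion $(k)$ of the present lemma, with the index $i$ replaced by $n+1-i$ (so that the descending word $s_n \cdots s_i \cdots s_n$ flips to the ascending word $s_1 \cdots s_{n+1-i} \cdots s_1$), and with the term $\beta_1 \in VT_i$ becoming one of the form $(\,\cdot \otimes \beta_1)$ as required. Since $i \mapsto n+1-i$ is a bijection of $\{1,\dots,n\}$ and $\phi_n$ is surjective on $VT_n$, every instance of the present statement arises in this way.

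The main obstacle I anticipate is the bookkeeping in the second step: the stabilization moves $M2$ and $M3$ relate elements of $VT_{n+1}$ to elements of $VT_n$, so one must track how $\phi$ behaves under the two different (left and right) inclusions $VT_n \hookrightarrow VT_{n+1}$ and verify that the flip genuinely exchanges them. If this compatibility proves awkward to state cleanly, the safe fallback is to mirror the proof of Lemma \ref{Lemma1} verbatim: the base cases become $M3$ (for $i=1$) and a short computation using $M5$, $M0$, $M1$, $M3$ (for $i=2$), while each appeal to $M2$ or $M4$ in the original induction is replaced by the corresponding appeal to $M3$ or $M5$, with all indices reflected through $i \mapsto n+1-i$.
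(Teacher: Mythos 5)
Your proposal is correct, but it takes a genuinely different route from the paper. The paper's entire proof of this lemma is the single sentence that it is ``similar to that of Lemma \ref{Lemma1}'' --- i.e.\ exactly your fallback: rerun the induction with $M3$, $M5$ in place of $M2$, $M4$ and all indices reflected. Your primary argument instead packages that mirroring into a single symmetry, the flip automorphism $\phi_n(s_i)=s_{n-i}$, $\phi_n(\rho_i)=\rho_{n-i}$, and deduces each assertion of this lemma by applying $\phi$ to the corresponding assertion of Lemma \ref{Lemma1}. This is sound: $\phi_n$ is a well-defined involutive automorphism (your observation that the mixed relation $\rho_i\rho_{i+1}s_i=s_{i+1}\rho_i\rho_{i+1}$ maps to the inverse of an instance of itself is exactly the one point needing care), the identity $\phi_{n+m}(m\otimes\beta)=$ (right inclusion of $\phi_n(\beta)$) correctly converts left padding to right padding, and the index change $i\mapsto n+1-i$ together with surjectivity of $\phi_n$ recovers every instance of the statement. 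What your approach buys is the elimination of a second long induction and an explicit, reusable statement of the left--right symmetry of the whole Markov calculus; what it costs is precisely the compatibility bookkeeping you flagged.

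One imprecision in that bookkeeping is worth fixing: it is not literally true that $\phi$ ``interchanges $M2\leftrightarrow M3$ and $M4\leftrightarrow M5$.'' The move $M2$ has a real and a virtual form, while $M3$ is only of real type; the flip of the virtual right stabilization $\beta\rho_n\sim\beta$ is the \emph{left} stabilization of virtual type $(1\otimes\beta')\rho_1\sim\beta'$, which is not a listed move but a derived consequence of $M0$--$M5$ (this is exactly what the paper records in Figure \ref{LeftStabilizationAsConsequence}). Similarly, the flip of an instance of $M4$ is $(1\otimes\beta_1')s_1(1\otimes\beta_2')s_1\sim(1\otimes\beta_1')\rho_1(1\otimes\beta_2')\rho_1$, which is an instance of $M5$ only after an $M1$-conjugation cyclically rotating the word. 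Neither point threatens your argument --- what you actually need, and what is true, is that the image under $\phi$ of every instance of every generating move is derivable from $M0$--$M5$, so that $x\sim y$ implies $\phi(x)\sim\phi(y)$ --- but the statement ``carries each generating move to a generating move'' should be weakened accordingly.
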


\begin{proof}
The proof is similar to that of Lemma \ref{Lemma1}.
\end{proof}

Recall that for a virtual doodle diagram $K$ on the plane, $W(K)$ denotes the closure of the complement of union of closed disk neighbourhoods of real crossings of $K$. The proofs of the following two lemmas are similar to \cite[Lemma 5 and Lemma 6]{Kamada}. We give proofs in our setting for the sake of completeness.

\begin{lemma}\label{Lemma3}
Let $K$ and $K'$ be two closed virtual twin diagrams such that $K'$ is obtained from $K$ by replacing $K \cap W(K)$ by $K' \cap W(K')$. Then $K$ and $K'$ are related by a finite sequence of $M0$ and $M2$ moves.
\end{lemma}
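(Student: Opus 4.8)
The plan is to analyze what geometric information distinguishes $K$ from $K'$. By hypothesis, $K$ and $K'$ are both closed virtual twin diagrams that agree outside $W(K)$, meaning they have identical real crossings placed identically in $\mathbb{R}^2 \setminus \mathbb{D}^\circ$; they differ only in how the arcs and loops of $W(K)$ connect the boundary points $V_\partial$ of the real-crossing neighbourhoods. Since both are \emph{closed} virtual twin diagrams, the radial projection $\pi \circ k$ is an $n$-fold covering of $\mathbb{S}^1$ for each, and the real crossings sit at distinct angular positions. The key observation is that replacing $K \cap W(K)$ by $K' \cap W(K')$ changes neither the number nor the angular placement of real crossings, so both diagrams cut open (along a ray $\pi^{-1}(p)$ avoiding crossings) to virtual twin diagrams whose real-crossing data coincide, while the connecting arcs --- which carry only virtual crossings --- may wind differently around the annulus.

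First I would cut both $K$ and $K'$ open along the same ray $\pi^{-1}(p)$ to obtain virtual twin diagrams $\beta, \beta' \in VT_n$. Because the real crossings occur at the same angular levels in the same order, $\beta$ and $\beta'$ have the same sequence of $\tilde{s}_i$-type letters; they can differ only in the $\tilde{\rho}_i$-type (virtual) letters coming from the reconnecting arcs in $W$. Next I would invoke Lemma \ref{MainLemma}: since $K \cap W(K)$ and $K' \cap W(K')$ contain only virtual crossings and the two closed diagrams share the same Gauss data on their real crossings (the real crossings and their boundary labellings $c_i^j$ are untouched), the portions in $W$ differ by $VR_1, VR_2, VR_3, M$ moves, equivalently by Kauffman detour moves. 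The goal is then to translate these detour moves on the closed diagram into the $VT_{n}$-level moves $M0$ and $M2$ on the corresponding twin word.

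The main work is the translation step. A detour move slides an arc (carrying only virtual crossings) across a region; when such a slide crosses the cutting ray $\pi^{-1}(p)$, it alters how strands are closed up and manifests on the twin word as a right-stabilization of virtual type $\beta \rho_n \sim \beta$ (an instance of $M2$) together with virtual commutation and braid relations among the $\rho_i$ (instances of $M0$, i.e.\ the defining relations \ref{virtual-twin-relations}). A detour move that does not cross the ray simply rearranges virtual crossings within the strip and is absorbed by the $M0$ relations $\rho_i \rho_j = \rho_j \rho_i$, $\rho_i \rho_{i+1} \rho_i = \rho_{i+1}\rho_i\rho_{i+1}$, and $\rho_i^2 = 1$. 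I would argue that a finite sequence of detour moves decomposes into elementary moves of these two types, so that the closed twin words for $K$ and $K'$ are related by a finite sequence of $M0$ and $M2$ moves.

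The hard part will be making precise the bookkeeping of how a detour move interacts with the cutting ray and the radial covering structure, since a single detour can wind an arc multiply around $\mathbb{S}^1$ and thereby introduce several virtual stabilizations at once. To handle this cleanly I would reduce an arbitrary detour to a composition of elementary detours, each either entirely inside or crossing the ray exactly once, and then verify case by case that each elementary detour corresponds to an $M0$ relation or a single $M2$ virtual stabilization. Keeping track of orientations (condition (4) in the definition of a closed virtual twin diagram) ensures that the stabilizations introduced are of the expected $\rho_n$-type rather than something requiring additional moves, which is exactly why only $M0$ and $M2$ --- and not the full stabilization and exchange repertoire --- are needed.
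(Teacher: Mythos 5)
Your proposal contains a genuine gap, and also a false preliminary claim. The false claim first: cutting $K$ and $K'$ along the same ray does \emph{not} produce words with "the same sequence of $\tilde{s}_i$-type letters," and the two cut-open diagrams need not even lie in the same group $VT_n$. The arcs of $K' \cap W(K')$ may wind around $\mathbb{D}$ a different number of times than those of $K \cap W(K)$, so $K$ and $K'$ can be coverings of $\mathbb{S}^1$ of different degrees; moreover, the index $i$ of the letter $s_i$ recording a given real crossing depends on how many strands run between that crossing and $\mathbb{D}$ at its angular position, which is determined by the arcs and hence can change. This is precisely why the statement of Lemma \ref{Lemma3} must allow $M2$ (stabilization) moves and not only $M0$.

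The more serious gap is the central "translation" step. Lemma \ref{MainLemma} only tells you that $K$ and $K'$ are related by a finite sequence of planar $VR_1$, $VR_2$, $VR_3$, $M$ (detour) moves; nothing guarantees that the intermediate diagrams in such a sequence remain closed virtual twin diagrams, i.e.\ that the covering/monotonicity condition is preserved. A single detour can replace a monotonic arc by one that backtracks in the angular direction, at which point there is no twin word to which an $M0$ or $M2$ move could be applied, and re-braiding the intermediate diagrams to fix this is essentially the content of the Markov theorem itself --- so your plan is circular, or at best defers the entire difficulty to an unproven "case by case" check of elementary detours. The paper avoids this with a different mechanism: first it equalizes, for each pair of corresponding arcs $a_i$ and $a_i'$, the intersection numbers $|a_i \cap \pi^{-1}(p)|$ and $|a_i' \cap \pi^{-1}(p)|$ using $\rho_i^2 = 1$ and $M2$ moves of virtual type; then, because the two arcs now have equal winding and both project to $\mathbb{S}^1$ as orientation-preserving immersions agreeing on endpoints, it constructs a homotopy rel boundary $k^t_i$ between them \emph{through maps whose composition with $\pi$ stays an orientation-preserving immersion}. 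Taking this homotopy generic with respect to the rest of the diagram and the disks $N_j$ realizes the replacement of $a_i$ by $a_i'$ as a finite sequence of $VR_2$, $VR_3$, $M$ moves performed entirely within the class of closed virtual twin diagrams, hence as $M0$ moves. This monotonicity-preserving homotopy is the key idea your proposal is missing, and without it (or an equivalent substitute) the translation of detour moves into $M0$ and $M2$ moves cannot be carried out.
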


\begin{proof}
We use notation from sections \ref{virtual-doodle-diagram} and \ref{Alexander-theorem-section}. Let $\pi$ be the radial projection. Let $N_1, N_2, \dots, N_n$ be  closed $2$-disks enclosing real crossings of $K$ and hence of $K'$ such that $\pi(N_i) \cap \pi(N_j)=\phi$ for all $i \neq j$, that is, real crossings lie at separate levels. Let $a_1, a_2, \dots, a_s$ be arcs/loops in $K \cap W(K)$ and $a_1', a_2', \dots, a_s'$ be the corresponding arcs/loops in $K' \cap W(K')$. Consider a  point $p \in \mathbb{S}^1$ such that $\pi^{-1}(p)$ does not intersect either of the crossing sets $V(K)$ and $V(K')$. If there exists some arc/loop $a_i$ and its corresponding arc/loop $a_i'$ such that $|a_i \cap \pi^{-1}(p)| \neq |a_i' \cap \pi^{-1}(p)|$, then we bring a segment of $a_i$ or $a_i'$ closer to the origin by repeated use of $\rho_i^2=1$ and some $M2$ moves of virtual type such that $|a_i \cap \pi^{-1}(p)|= |a_i' \cap \pi^{-1}(p)|$. Thus, we can assume that $|a_i \cap \pi^{-1}(p)| = |a_i' \cap \pi^{-1}(p)|$ for all $i$.
\par
 
 Let $k$ and $k'$ be the underlying immersions $\sqcup~ \mathbb{S}^1 \to\mathbb{R}^2 \setminus \mathbb{D}^{\circ}$ of $K$ and $K'$, respectively, such that they are identical in preimage of each $N_i$. Let $I_1, I_2, \dots, I_s$ be intervals/circles in $\sqcup~ \mathbb{S}^1$ such that $k(I_i)=a_i$ and $k'(I_i)=a_i'$. We note that $\pi \circ k \restr{I_i}$ and $\pi \circ k' \restr{I_i}$ are orientation preserving immersions with $\pi \circ k \restr{\partial I_i}=\pi \circ k' \restr{\partial I_i}$. Since $|a_i \cap \pi^{-1}(p)|= |a_i' \cap \pi^{-1}(p)|$ for any $i$, there exists a homotopy $k^t_i : I_i \to \mathbb{R}^2 \setminus \mathbb{D}^{\circ}$ relative to boundary $\partial I_i$ such that $k^0_i= k \restr{I_i}$ and $k^1_i= k' \restr{I_i}$ and  $\pi \circ k^t_i$ is an orientation preserving immersion. If we take the homotopy generically with respect to $K \cap W(K)$, $K' \cap W(K')$ and the $2$-disks $N_j$, we see that $a_i'$ can be transformed to $a_i$ by a sequence of $VR_2$, $VR_3$ and $M$ moves in $\mathbb{R}^2 \setminus \mathbb{D}^{\circ}$. Consequently, $K$ and $K'$ are related by a finite sequence of $M0$ and $M2$ moves.
\end{proof}

\begin{lemma} \label{Lemma4}
Let $K$ and $K'$ be closed virtual twin diagrams having the same Gauss data. Then $K$ and $K'$ are related by a finite sequence of $M0$ and $M2$ moves.
\end{lemma}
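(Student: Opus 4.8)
The plan is to reduce the statement to Lemma \ref{Lemma3}, whose hypothesis requires the two diagrams to share \emph{identical} real crossings (the same enclosing disks $N_i$ and the same local pictures) while possibly differing on the complementary part $W$. Thus the entire content of the present lemma lies in exploiting the matching $\sigma \colon V_R(K) \to V_R(K')$ supplied by the equality of Gauss data to reposition, using $M0$ and $M2$ alone, the real crossings of $K'$ so that $\sigma(c_i)$ comes to occupy the disk $N_i$ of $c_i$ with the same local diagram, for every $i$. Once this alignment is achieved, the modified $K'$ and $K$ differ only inside $W$, and an application of Lemma \ref{Lemma3} finishes the argument.

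To carry out the repositioning I would work throughout with closed diagrams, for which a cyclic rotation of the crossing word costs nothing -- it is literally the same closed diagram -- so that no conjugation move $M1$ is needed. First I would bring the \emph{cyclic order} of the real crossings of $K'$ into agreement with that of $K$ under $\sigma$. The only interchanges of two consecutive real crossings I am allowed to use are those that leave the Gauss data unchanged: after clearing the virtual crossings lying between them by detour moves (these are realized on closed diagrams by the virtual relations inside $M0$ together with the virtual stabilizations $\beta\rho_n\sim\beta$ of $M2$, exactly as in the proof of Lemma \ref{Lemma3}), two consecutive real crossings commute precisely when they lie on far-separated strands, and their interchange is then the far-commutativity relation $s_is_j=s_js_i$ with $|i-j|\ge 2$ belonging to $M0$. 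Having matched the cyclic orders, I would fix the radial positions and the winding of the connecting arcs using the virtual-type relations of $M0$ and, where winding numbers disagree, the virtual stabilizations of $M2$, precisely as in the winding-adjustment step of Lemma \ref{Lemma3}; this seats each $\sigma(c_i)$ in the disk $N_i$ with the correct local picture.

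The main obstacle I anticipate is the bookkeeping that guarantees the reordering can always be completed with \emph{legal} moves only: one must verify that whenever the cyclic orders of the real crossings of $K$ and $K'$ disagree, every interchange required to reconcile them is between far-separated strands, so that no forbidden interchange $s_is_{i+1}\leftrightarrow s_{i+1}s_i$ -- which does alter the Gauss data, cf. Proposition \ref{Justification For Forbidden Moves} -- is ever forced. This is where the equality of Gauss data must be used in full: the pair $(V_R(K),X(K))$ records the complete wiring of the real crossings, and two planar realizations of one wiring can differ in cyclic order only by transpositions of commuting crossings. The remaining, more routine, difficulty is to keep the diagram inside the class of closed virtual twin diagrams while a real crossing is slid past virtual crossings, i.e.\ to keep $\pi\circ k$ an $n$-fold covering with crossings at distinct levels; this is handled as in Lemma \ref{Lemma3} by the mixed relations $\rho_i\rho_{i+1}s_i=s_{i+1}\rho_i\rho_{i+1}$ of $M0$, which let a real crossing travel along the strands through the virtual web, together with $M2$ to absorb any surplus winding. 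With the real crossings of the modified $K'$ now identical to those of $K$, Lemma \ref{Lemma3} completes the proof.
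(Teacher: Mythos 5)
Your global strategy---align the real crossings of $K'$ with those of $K$ and then quote Lemma \ref{Lemma3}---is the same as the paper's, and your treatment of the case where the cyclic orders of the real crossings already agree (the paper's Case I) is fine. The gap is in the reordering step. Your argument hinges on the claim that ``two planar realizations of one wiring can differ in cyclic order only by transpositions of commuting crossings,'' i.e.\ of crossings lying on far-separated strands, so that every required interchange is the far-commutativity relation $s_is_j=s_js_i$, $|i-j|\ge 2$, inside $M0$. This claim is false, and nothing in the Gauss data supports it: the pair $\big(V_R(K),X(K)\big)$ is purely combinatorial wiring data and constrains neither the cyclic order of the crossings around $\mathbb{S}^1$, nor the strand positions they occupy, nor even the number of strands. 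Indeed, the braiding process of Theorem \ref{Alexender theorem} places the real crossings around the annulus in an \emph{arbitrary} cyclic order and then wires them up with arcs that wind around and intersect everything virtually; applying it to a single doodle with two different choices of placement produces closed virtual twin diagrams with the same Gauss data whose cyclic orders differ by transpositions of crossings that need not be far-separated (on a closed diagram with three strands, for instance, every real crossing sits at position $1$ or $2$, so far-commutativity is never available at all). Proposition \ref{Justification For Forbidden Moves} tells you that you cannot swap two such crossings inside the braid-like slice; it does not tell you that the situation never arises. It does arise, and it is precisely the hard case of the lemma.

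The paper handles this case separately (its Case II): if the cyclic orders agree except for $\pi(N_1)$ and $\pi(N_2)$, then the slice of the closed diagram between these two levels is a virtual twin diagram $\beta_1$ containing no real crossing, and $N_1$ is carried past $N_2$ not by commuting the two crossings within the braid-like diagram but by a detour around the annulus built from $M2$ stabilizations together with the virtual and mixed relations of $M0$, as shown in Figure \ref{LemmaDiagram}; this works for arbitrary strand positions and leaves the Gauss data unchanged. Note that $M2$ enters this reordering step in an essential way, whereas under your claim pure $M0$ far-commutativity would suffice---a sign that your reduction has lost the actual content of the lemma. To repair your proof you would have to establish exactly the paper's Case II: any two cyclically adjacent real crossings whose intermediate slice is purely virtual can be interchanged by $M0$ and $M2$ moves, with no hypothesis on which strands they occupy.
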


\begin{proof}
Let $N_1, N_2, \dots, N_n$ be closed $2$-disks enclosing real crossings of $K$ and $N_1', N_2', \dots, N_n'$ be the corresponding closed $2$-disks enclosing real crossings of $K'$. We consider two cases depending on the position of $N_i$ and $N_j'$ with respect to the map $\pi$.

\noindent {{Case I.}} Suppose that $\pi(N_1), \pi(N_2), \dots, \pi(N_n)$ and $\pi(N_1'), \pi(N_2'), \dots, \pi(N_n')$ appear in the same cyclic order on boundary $\mathbb{S}^1$. Then we deform $K$  by isotopies of the plane such that $N_i=N_i'$ for all $i$ and diagrams of $K$ and $K'$ are identical in $N_i$ for all $i$. Thus, $K'$ can be obtained from $K$ by replacing $K \cap W(K)$ by $K' \cap W(K')$, and we are done by Lemma \ref{Lemma3}.
\

\noindent {{Case II.}} Suppose that $\pi(N_1), \pi(N_2), \dots, \pi(N_n)$ and $\pi(N_1'), \pi(N_2'), \dots, \pi(N_n')$ do not appear in the same cyclic order on $\mathbb{S}^1$. Without loss of generality, we may assume that the two sequences of sets appear in the same order except $\pi(N_1)$ and $\pi(N_2)$. Notice that the diagram $K$ looks as shown in the leftmost part in Figure \ref{LemmaDiagram}, where $\beta_1$ is a virtual twin diagram with no real crossing and $\beta_2$ a virtual twin diagram. As shown in Figure \ref{LemmaDiagram}, we can make $\pi(N_1), \pi(N_2), \dots, \pi(N_n)$ and $\pi(N_1'), \pi(N_2'), \dots, \pi(N_n')$ to appear in the same cyclic order on $\mathbb{S}^1$ using $M0$ and $M2$ moves. Thus, we get back to Case I and we are done.
\begin{figure}[hbtp]
\centering
\includegraphics[scale=0.3]{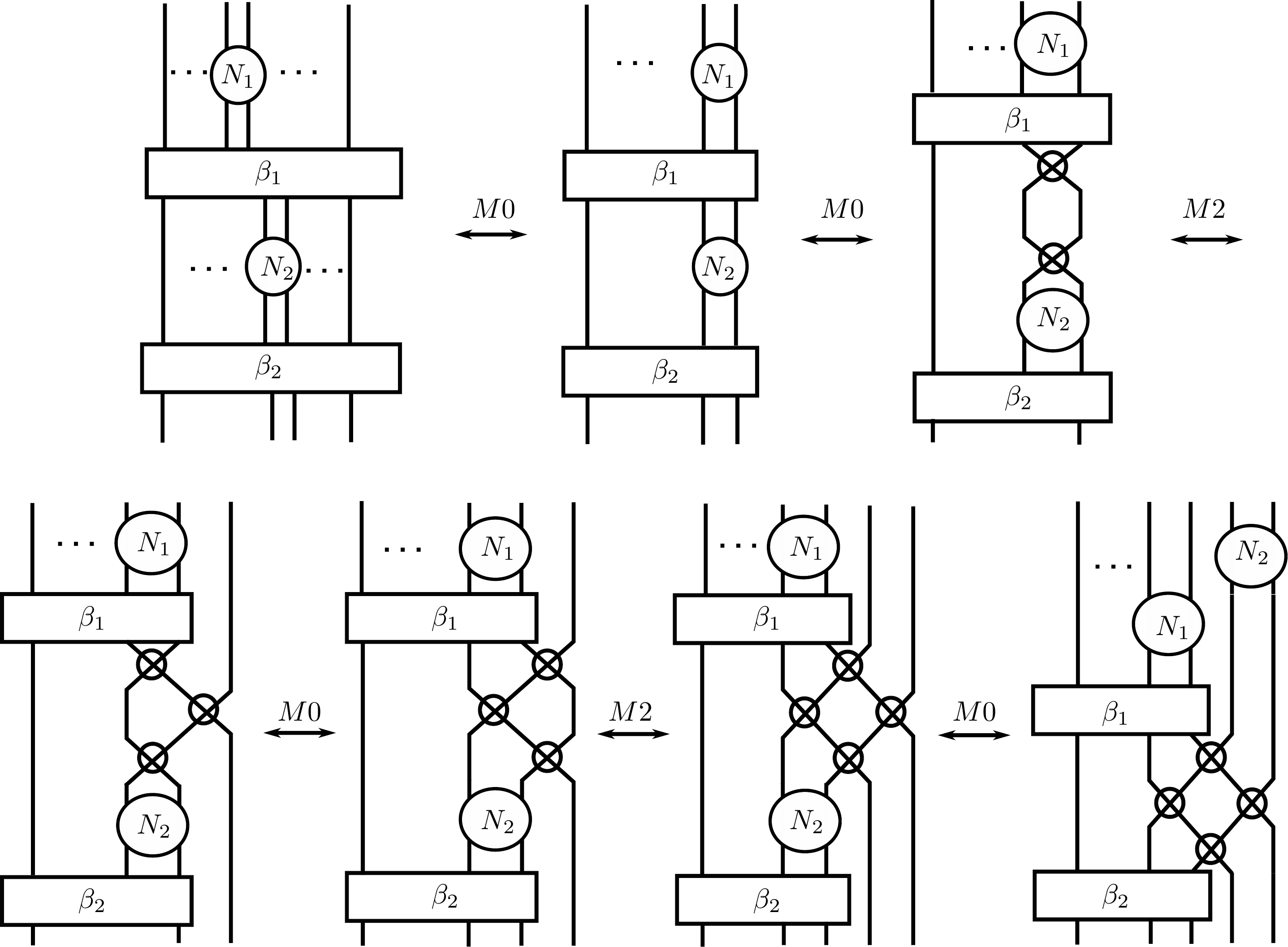}
\caption{}
\label{LemmaDiagram}
\end{figure}
\end{proof}

\begin{corollary}\label{Closed Twin and Doodle}
A closed virtual twin diagram for any oriented virtual doodle is uniquely determined upto $M0$ and $M2$ moves.
\end{corollary}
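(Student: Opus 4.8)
The plan is to obtain the statement as an immediate combination of the braiding process of Theorem~\ref{Alexender theorem} with Lemma~\ref{Lemma4}. Fix an oriented virtual doodle diagram $K$ with $n$ real crossings, and let $\hat K$ be a closed virtual twin diagram produced from $K$ by the braiding process. The construction involves choices---the cyclic order in which the $n$ real crossings are positioned in $\mathbb{R}^2 \setminus \mathbb{D}^{\circ}$, and the manner in which each connecting arc is wound around $\mathbb{S}^1$ so as to be compatible with its orientation, together with the placement of the resulting virtual crossings. First I would record the observation, already implicit in the proof of Theorem~\ref{Alexender theorem}, that none of these choices affects the Gauss data: each real crossing of $K$ together with its boundary labels is transported unchanged into the annulus, and the set $X(K)$ of connection pairs $(a,b)$ is realised by the connecting arcs regardless of how they wind or where they meet virtually. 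Hence every closed virtual twin diagram obtained from $K$ carries the same Gauss data $\big(V_R(K),X(K)\big)$, and in particular the same number of real crossings, as $K$.

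Consequently, any two closed virtual twin diagrams $\hat K$ and $\hat K'$ associated to $K$ have identical Gauss data. At this point I would simply invoke Lemma~\ref{Lemma4}, which states that two closed virtual twin diagrams with the same Gauss data are related by a finite sequence of $M0$ and $M2$ moves. Applying it to $\hat K$ and $\hat K'$ yields the claim that the closed virtual twin diagram attached to $K$ is uniquely determined up to $M0$ and $M2$ moves.

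The single point that needs genuine verification is the Gauss-data claim of the first paragraph; everything after it is a direct appeal to Lemma~\ref{Lemma4}. The cleanest route is through Lemma~\ref{MainLemma}: two admissible outputs of the braiding process differ only by rewindings of the connecting arcs and repositioning of virtual crossings, that is, by Kauffman's detour moves (Figure~\ref{DetourMove}), and by the equivalence of parts (3) and (1) of Lemma~\ref{MainLemma} such diagrams share the same Gauss data. Thus the only real work, and hence the main obstacle, is the bookkeeping needed to confirm that the braiding construction neither creates nor destroys a real crossing and never alters the connection pattern $X(K)$, so that the hypotheses of Lemma~\ref{Lemma4} are met exactly.
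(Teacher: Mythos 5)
Your proposal is correct and follows essentially the same route as the paper: the paper's proof likewise observes that any two closed virtual twin diagrams produced from a given oriented virtual doodle diagram (via the braiding process of Theorem~\ref{Alexender theorem}) share the same Gauss data, and then concludes by Lemma~\ref{Lemma4}. Your extra remark that the independence of choices in the braiding construction is witnessed by detour moves, hence by the equivalence of parts (1) and (3) of Lemma~\ref{MainLemma}, is exactly the justification the paper leaves implicit.
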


\begin{proof}
It follows from the fact that any two closed virtual twin diagrams for a virtual doodle have the same Gauss data (as in the proof of Theorem \ref{Alexender theorem}). The result then follows from Lemma \ref{Lemma4}.
\end{proof}

We now state and prove Markov Theorem for virtual doodles.

\begin{theorem}\label{Markov Theorem}
Two virtual twin diagrams on the plane  (possibly on different number of strands) have equivalent closures if and only if they are related by a finite sequence of moves $M0-M5$.
\end{theorem}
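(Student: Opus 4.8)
The plan is to prove both implications, the reverse (sufficiency of $M0$--$M5$) being a direct diagrammatic check and the forward implication (necessity) requiring the full Gauss data machinery developed above. For sufficiency I would verify that each move $M0$--$M5$ induces an equivalence of the associated closures. The move $M0$ is immediate, since these are precisely the defining relations \eqref{virtual-twin-relations} of $VT_n$ (Figure~\ref{ReidemeisterMoves}), which leave the closure unchanged. Conjugation $M1$ corresponds to sliding a portion of the diagram once around the braid axis, giving an equivalence of closures through planar isotopy and detour moves. The stabilizations $M2$ and $M3$ correspond to the flat Reidemeister move $R_1$ for the real generators and to $VR_1$ together with detour moves for the virtual generator, each a legitimate doodle move of Figure~\ref{ReidemeisterMovesForDoodles}. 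Finally, the exchange moves $M4$ and $M5$ are checked directly: sweeping the closure arc of the boundary strand once around the braid axis carries its two real crossings through the virtual region and trades them for two virtual crossings, so the closures agree as virtual doodles. This settles the \emph{if} direction.

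For necessity, suppose $\beta \in VT_m$ and $\beta' \in VT_{m'}$ have equivalent closures. By definition there is a finite sequence of virtual doodle diagrams $\widehat{\beta} = D_0, D_1, \dots, D_k = \widehat{\beta'}$ in which consecutive diagrams differ by a single move from $\{R_1, R_2, VR_1, VR_2, VR_3, M\}$ or a planar isotopy. The strategy is to apply the braiding process of Theorem~\ref{Alexender theorem} to each $D_j$, producing a closed virtual twin diagram $\gamma_j$, and to show that consecutive closed twins $\gamma_j$ and $\gamma_{j+1}$ are related by $M0$--$M5$. Since Corollary~\ref{Closed Twin and Doodle} guarantees that the braiding process is well defined up to $M0$ and $M2$, it is enough to treat each type of generating move separately, choosing the most convenient braiding in each case.

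The generating moves split into three families. The purely virtual moves $VR_1, VR_2, VR_3, M$ and planar isotopies preserve the Gauss data of the diagram by Lemma~\ref{MainLemma}, so the resulting closed twins carry the same Gauss data and are therefore related by $M0$ and $M2$ moves via Lemma~\ref{Lemma4}. An $R_2$ move creates or cancels an adjacent pair of identical real crossings; after braiding this inserts or deletes a subword $s_i^2$, an instance of $M0$. An $R_1$ move creates or removes a single real crossing, which in braided form is a stabilization; when the kink sits at an interior strand rather than at a boundary strand, Lemmas~\ref{Lemma1} and \ref{Lemma2} are used to migrate the stabilization to the first or last strand, expressing it through conjugation $M1$, the boundary stabilizations $M2$ and $M3$, and the exchange moves $M4$ and $M5$.

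The principal obstacle I anticipate is the bookkeeping needed when an elementary doodle move, applied to a diagram already in closed braid form, destroys monotonicity and produces an arc winding the wrong way around the braid axis. Re-braiding such an arc drags real crossings across the cut ray $\pi^{-1}(p)$, and it is exactly here that the exchange moves $M4$ and $M5$ become indispensable: the content of Lemmas~\ref{Lemma1} and \ref{Lemma2} is that sliding a crossing once around the axis equals a prescribed combination of exchanges, conjugations, and stabilizations. Assembling these local analyses into a single coherent argument by induction on $k$, while tracking strand indices and the placement of the trivial strands in $m \otimes (-)$, is the technical heart of the proof.
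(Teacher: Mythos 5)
Your overall architecture for the necessity direction matches the paper's: braid every intermediate diagram of the equivalence via the process of Theorem \ref{Alexender theorem}, invoke Corollary \ref{Closed Twin and Doodle} so that the choice of braiding only costs $M0$ and $M2$ moves, and then analyse the generating moves one at a time, handling the virtual moves by Gauss data (Lemma \ref{MainLemma} together with Lemma \ref{Lemma4}) and $R_1$ by Lemmas \ref{Lemma1} and \ref{Lemma2}. The genuine gap is in your $R_2$ case, and it is exactly where orientations matter. You run the induction over the \emph{unoriented} move list $\{R_1,R_2,VR_1,VR_2,VR_3,M\}$ and assert that an $R_2$ move ``after braiding inserts or deletes a subword $s_i^2$''. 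This is true only when the two arcs of the bigon are coherently (parallel) oriented. When they are anti-parallel, the braiding process forces one side of the bigon to wind once around the axis, so the two real crossings are separated in the braid word by a full circuit of (virtual) crossings: the braided forms are of the type $\tau_n\cdots\tau_{i+1}s_i\beta_1 s_i\tau_{i+1}\cdots\tau_n\beta_2$ versus $\tau_n\cdots\tau_{i+1}\rho_i\beta_1\rho_i\tau_{i+1}\cdots\tau_n\beta_2$, and no sequence of $M0$ moves relates them. This is precisely why the paper passes to the \emph{oriented} generating set of Figure \ref{OrientedReidemeisterMoves}, which contains the two additional moves $MVR_1$ and $MVR_2$, and why its proof has the separate Cases IV and V, settled by assertion (3) of Lemma \ref{Lemma1} and of Lemma \ref{Lemma2} --- the assertions whose proofs genuinely consume the exchange moves $M4$ and $M5$. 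In your proposal the exchange moves enter only through the $R_1$ ``migration'' and a general appeal to re-braiding bookkeeping, so the cases that make $M4$ and $M5$ indispensable are never actually treated; as written, the induction step fails whenever the doodle equivalence uses an anti-parallel $R_2$ (equivalently, an $MVR$ move).

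A secondary point: your verification of sufficiency for $M4$ and $M5$ (``sweeping the closure arc \ldots\ trades them for two virtual crossings'') is not a legal sequence of moves as described, since no move of Figure \ref{ReidemeisterMovesForDoodles} converts a real crossing into a virtual one as an arc sweeps across the diagram, and detour moves apply only to arcs \emph{all} of whose crossings are virtual. The paper treats this direction as immediate from the defining figures of the moves, so this is minor, but any written argument should proceed only by creating or cancelling crossings in pairs ($R_2$, $VR_2$) and detouring purely virtual arcs.
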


\begin{proof}
The proof of the converse implication is immediate. For the forward implication, let $K$ and $K'$ be two closed virtual twin diagrams which are equivalent as virtual doodles. That is, there is a finite sequence of virtual doodle diagrams, say, $K=K_0, K_1, \dots , K_{n}=K'$ such that $K_i$ is obtained from $K_{i-1}$ by one of the moves as shown in Figure \ref{OrientedReidemeisterMoves}. Note that the virtual doodle diagrams obtained in the intermediate steps may not be closed virtual twin diagrams. Let $\widetilde{K}_i$ be a closed virtual twin diagram for $K_i$ obtained by the braiding process as in the proof of Theorem \ref{Alexender theorem}. Without loss of generality,  we can assume that $\widetilde{K}_0=K_0$ and $\widetilde{K}_n=K_n$. By Corollary \ref{Closed Twin and Doodle}, we know that each $\widetilde{K}_i$ is uniquely determined up to $M0$ and $M2$ moves. Thus, it suffices to prove that $\widetilde{K}_{i-1}$ and $\widetilde{K}_{i}$ are related by $M0-M5$ moves. We proceed by considering each move in Figure \ref{OrientedReidemeisterMoves}.

\noindent {{Case I.}} Let $K_{i}$ be obtained from $K_{i-1}$ by applying any one of the $VR_1$, $VR_2$, $VR_3$ or $M$ moves. Then $K_{i}$ and $K_{i-1}$ have the same Gauss data, which means that $\widetilde{K}_{i}$ and $\widetilde{K}_{i-1}$ also have the same Gauss data. Then, by Lemma \ref{Lemma4}, $\widetilde{K}_{i-1}$ and $\widetilde{K}_{i}$ are related by $M0$ and $M2$  moves.

\noindent {{Case II.}} If $K_{i}$ is obtained from $K_{i-1}$ by an $R_2$ move, then $\widetilde{K}_{i-1}$ and $\widetilde{K}_{i}$ are related by a $M0$ move and we are done.
\medskip

For the remaining moves, let $\mathbb{D}$ to be the closed $2$-disk in the plane where one of the remaining moves is applied so that $K_{i-1} \cap (\mathbb{R}^2 \setminus \mathbb{D}) = K_{i} \cap (\mathbb{R}^2 \setminus \mathbb{D})$. We apply the braiding process to $K_{i-1} \cap (\mathbb{R}^2 \setminus \mathbb{D}) = K_{i} \cap(\mathbb{R}^2 \setminus \mathbb{D})$ to get diagrams $\widetilde{K}'_{i-1}$ and $\widetilde{K}'_{i}$ such that $\widetilde{K}'_{i-1} \cap \mathbb{D} = K_{i-1} \cap \mathbb{D}$, $\widetilde{K}'_{i} \cap \mathbb{D} = K_{i} \cap \mathbb{D}$ and $\widetilde{K}'_{i-1} \cap (\mathbb{R}^2 \setminus \mathbb{D}) =\widetilde{K}'_{i} \cap (\mathbb{R}^2 \setminus \mathbb{D})$.
\medskip

\noindent {{Case III.}} If $K_{i}$ is obtained from $K_{i-1}$ by an $R_{1a}$ or $R_{1b}$ move, then after the braiding process, the diagrams $\widetilde{K}'_{i-1}$ and $\widetilde{K}'_{i}$ looks like as in Figure \ref{R1Move}. Note that up to conjugation, virtual twins obtained from $\widetilde{K}'_{i-1}$ and $\widetilde{K}'_{i}$ are either of the following forms
$$\beta \text{ and } \beta \tau_n \tau_{n-1} \dots \tau_{i} \tau_{i-1} \tau_{i} \dots \tau_{n-1} \tau_{n}$$ or  $$\beta \text{ and } (1 \otimes \beta) \tau_1 \tau_{2} \dots \tau_{i-1} \tau_{i} \tau_{i-1} \dots \tau_{2} \tau_{1},$$ where $\beta \in VT_n$, $\tau_j=s_j$ or $\rho_j$ and $1 \leq i \leq n$. In each case, both the virtual twins are equivalent to each other by Lemma \ref{Lemma1} or Lemma \ref{Lemma2}. Thus, $\widetilde{K}_{i-1}$ and $\widetilde{K}_{i}$ are related by $M0-M5$ moves.
\begin{figure}[hbtp]
\centering
\includegraphics[scale=0.3]{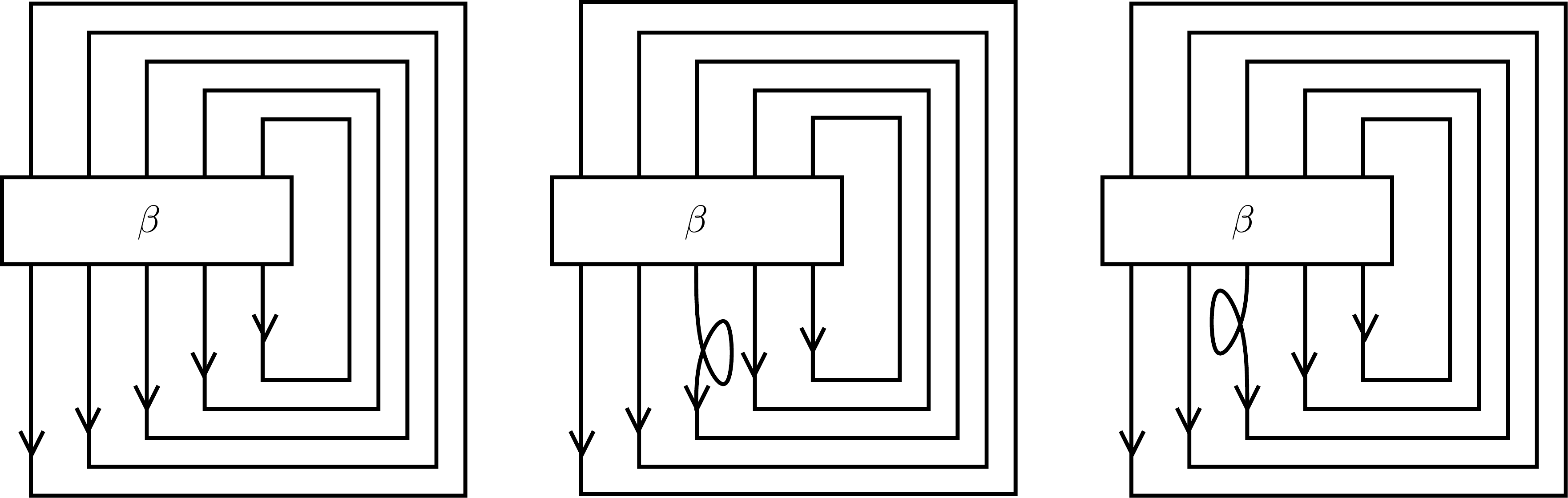}
\caption{$\widetilde{K}'_{i-1}$ and $\widetilde{K}'_i$ corresponding to $R_{1a}$ or $R_{1b}$ move}
\label{R1Move}
\end{figure}
\medskip

\noindent {{Case IV.}} If $K_{i}$ is obtained from $K_{i-1}$ by an $MVR_1$ move, then after braiding process, the diagrams $\widetilde{K}'_{i-1}$ and $\widetilde{K}'_{i}$ looks as in Figure \ref{MVR1Move}. The virtual twins obtained from $\widetilde{K}'_{i-1}$ and $\widetilde{K}'_{i}$ are of the form
$$\tau_n \tau_{n-1} \dots \tau_{i+1} s_i \beta_1 s_i \tau_{i+1} \dots \tau_{n-1} \tau_n \beta_2$$
and 
$$\tau_n \tau_{n-1} \dots \tau_{i+1} \rho_i \beta_1 \rho_i \tau_{i+1} \dots \tau_{n-1} \tau_n \beta_2,$$
respectively. By Lemma \ref{Lemma1}, both these virtual twins are equivalent, and hence $\widetilde{K}_{i-1}$ and $\widetilde{K}_{i}$ are related by $M0-M5$ moves.
\begin{figure}[hbtp]
\centering
\includegraphics[scale=0.35]{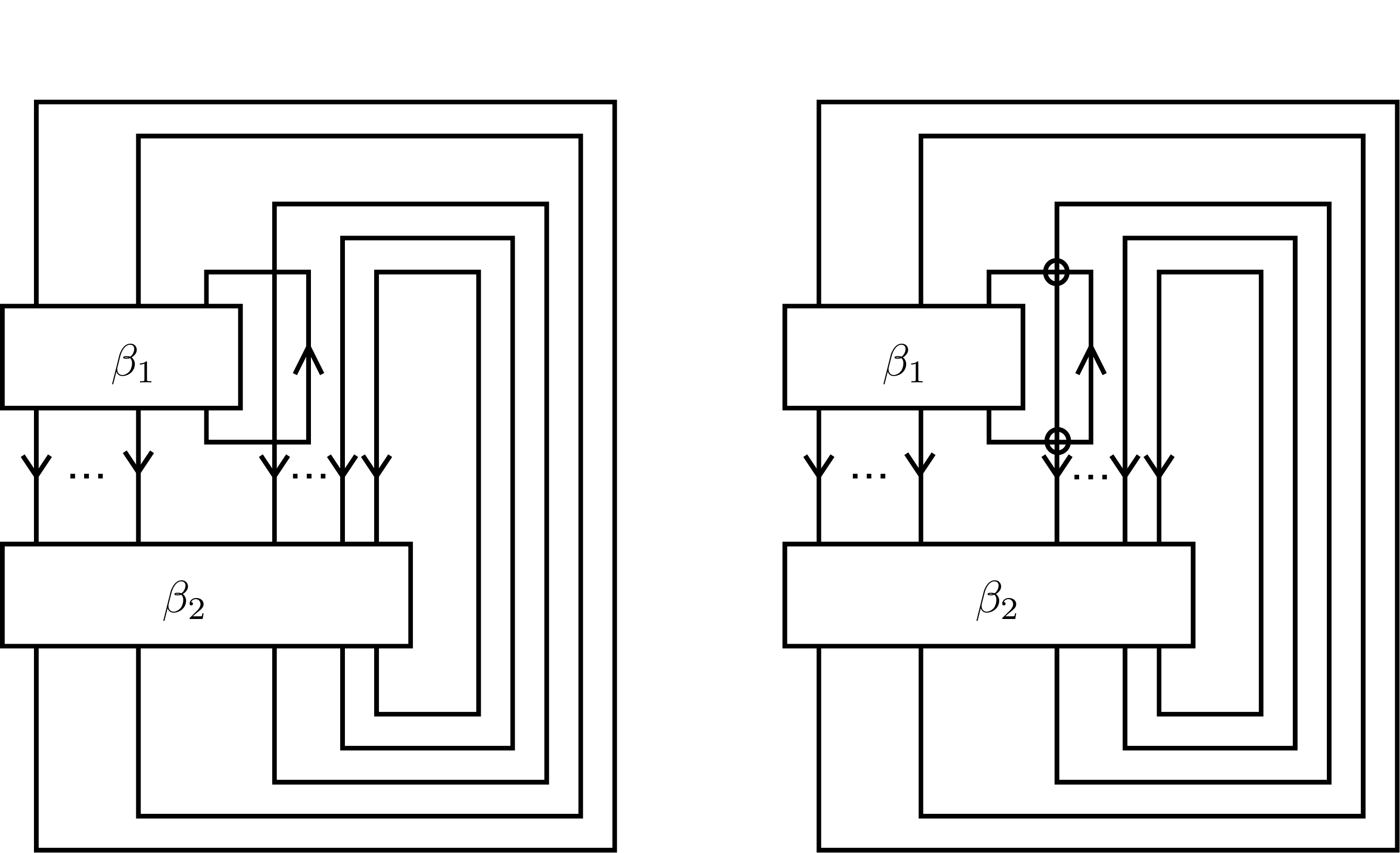}
\caption{$\widetilde{K}'_{i-1}$ and $\widetilde{K}'_i$ corresponding to $MVR_1$ move}
\label{MVR1Move}
\end{figure}
\begin{figure}[hbtp]
\centering
\includegraphics[scale=0.35]{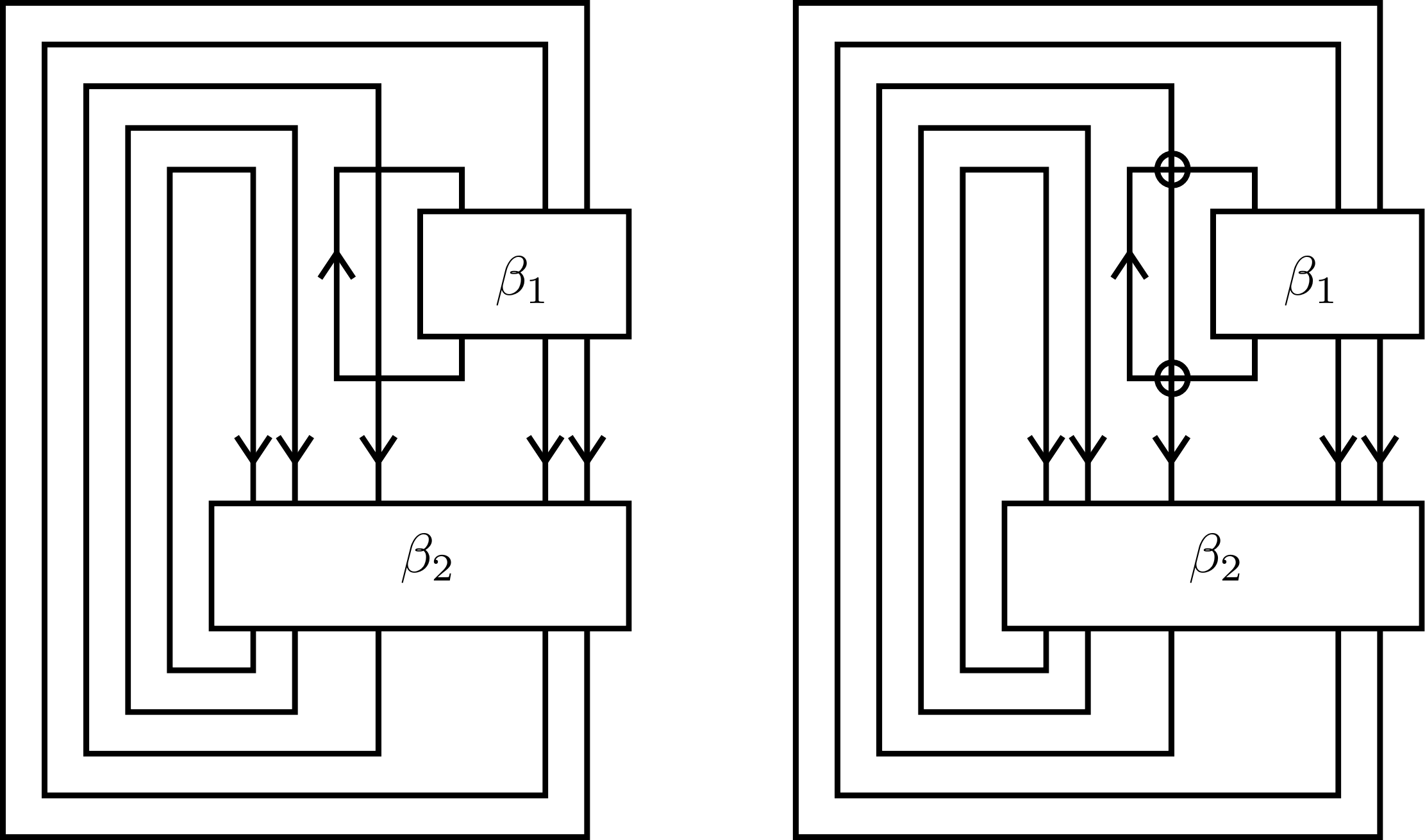}
\caption{$\widetilde{K}'_{i-1}$ and $\widetilde{K}'_i$ corresponding to $MVR_2$ move}
\label{MVR2Move}
\end{figure}
\medskip

\noindent {{Case V.}} If the move applied is $MVR_2$, then after the braiding process, the diagrams $\widetilde{K}'_{i-1}$ and $\widetilde{K}'_{i}$ looks as in Figure \ref{MVR2Move}. The virtual twins obtained from $\widetilde{K}'_{i-1}$ and $\widetilde{K}'_{i}$ are of the form
$$\tau_1 \tau_{2} \dots \tau_{i-1} s_i (i \otimes \beta_1) s_i \tau_{i-1} \dots \tau_{2} \tau_1  (1 \otimes \beta_2)$$
and
$$\tau_1 \tau_{2} \dots \tau_{i-1} \rho_i (i \otimes \beta_1) \rho_i \tau_{i-1} \dots \tau_{2} \tau_1  (1 \otimes \beta_2),$$
respectively. By Lemma \ref{Lemma2}, both of these virtual twins are equivalent, and hence $\widetilde{K}_{i-1}$ and $\widetilde{K}_{i}$ are related by $M0-M5$ moves.
\end{proof}
\medskip

\begin{ack}
The authors are grateful to the anonymous referees for their detailed reports which have substantially improved the readability of the paper. Neha Nanda thanks IISER Mohali for the PhD Research Fellowship. Mahender Singh is supported by the Swarna Jayanti Fellowship grants DST/SJF/MSA-02/2018-19 and SB/SJF/2019-20.
\end{ack}
\medskip

\end{document}